\newtheorem{thm}{Theorem}[section]
\newtheorem{prop}{Proposition}[section]
\numberwithin{equation}{section}
\begin{document}

\def\re{{\rm Re}\:}
\def\im{{\rm Im}\:}
\def\R{{\mathbb R}}
\def\C{{\mathbb C}}
\def\N{{\mathbb N}}
\def\DD{{\mathbb D}}
\def\S{{\mathbb S}}
\def\rr{{\cal R}}
\def\e{\emptyset}
\def\dQ{\partial Q}
\def\dk{\partial K}
\def\endofproof{{\rule{6pt}{6pt}}}
\def\di{\displaystyle}
\def\dist{\mbox{\rm dist}}
\def\u-{\overline{u}}
\def\du{\frac{\partial}{\partial u}}
\def\dv{\frac{\partial}{\partial v}}
\def\dt{\frac{d}{d t}}
\def\dx{\frac{\partial}{\partial x}}
\def\con{\mbox{\rm const }}
\def\Box{\spadesuit}
\def\ii{{\bf i}}
\def\curl{{\rm curl}\,}
\def\dive{{\rm div}\,}
\def\grad{{\rm grad}\,}
\def\dist{\mbox{\rm dist}}
\def\pr{\mbox{\rm pr}}
\def\pp{{\cal P}}
\def\supp{\mbox{\rm supp}}
\def\Arg{\mbox{\rm Arg}}
\def\In{\mbox{\rm Int}}
\def\Re{\mbox{\rm Re}\:}
\def\li{\mbox{\rm li}}
\def\ep{\epsilon}
\def\tr{\tilde{R}}
\def\be{\begin{equation}}
\def\ee{\end{equation}}
\def\cn{{\mathcal N}}
\def\sn{{\mathbb  S}^{n-1}}
\def\Ker {{\rm Ker}\:}
\def\el{E_{\lambda}}
\def\Rc{{\mathcal R}}
\def\la{\langle}
\def\ra{\rangle}
\def\Ko{\Ker G_0}
\def\Kd{\Ker G_b}
\def\hc{{\mathcal H}}
\def\caH{\hc}
\def\caO{{\mathcal O}}
\def\la{\langle}
\def\ra{\rangle}
\def\sp{\sigma_{+}}
\def\pa{\partial}
\def\et{|\eta'|^2}
\def\lg{L^2(\Gamma)}
\def\h1{H^1_h(\Gamma)}
\def\fr{\frac}
\def\il{-\ii \lambda}
\def\2l{-\fr{1}{2\lambda}}
\def\dd{\fr{d}{d\lambda}}
\def\oc{{\mathcal O}}
\def\nc{{\mathcal N}}
\def\cc{{\mathcal C}}
\def\bc{{\mathcal B}}
\def\tr{{\rm tr}}
\def\th{\tilde{h}} 
\def\pc{{\mathcal P}}

\title[dissipative eigenvalues] {Asymptotic of the dissipative eigenvalues of Maxwell's equations}

\author[V. Petkov]{Vesselin Petkov}
\address{Institut de Math\'ematiques de Bordeaux, 351,
Cours de la Lib\'eration, 33405  Talence, France}
\email{petkov@math.u-bordeaux.fr}

\begin{abstract}
 Let $\Omega = \R^3 \setminus \bar{K}$, where $K$  is  an open bounded domain with smooth boundary $\Gamma$. Let  $V(t) = e^{tG_b},\: t \geq 0,$ be the semigroup
  related to  Maxwell's equations  in $\Omega$  with dissipative boundary condition $\nu \wedge (\nu \wedge E)+ \gamma(x) (\nu \wedge H) = 0, \gamma(x) > 0, \forall x \in \Gamma.$  We study the case when $\gamma(x) \neq 1, \: \forall x \in \Gamma,$  and we establish a Weyl formula for the counting function of the  eigenvalues of $G_b$ in a polynomial neighbourhood of the negative real axis.

\end{abstract}

\maketitle
{\bf Keywords}: Dissipative boundary conditions, Dissipative eigenvalues, Weyl formula
\section{Introduction}

Let $K \subset \{ x\in \R^3: \: |x| \leq a\}$ be an open connected domain 
and let $\Omega = \R^3 \setminus \bar{K}$  
be connected  domain with $C^{\infty}$ smooth boundary $\Gamma$.
Consider the boundary problem
\begin{equation}  \label{eq:1.1}
\begin{cases} 
\partial_t E = \curl H,\qquad \partial_t H = -\curl E \quad {\rm in}\quad \R_t^+ \times \Omega,
\\
\nu \wedge (\nu \wedge E )+\gamma(x)(\nu \wedge H) = 0 \quad{\rm on} \quad \R_t^+ \times \Gamma,
\\
E(0, x) = E_0(x), \qquad H(0, x) = H_0(x)
\end{cases}
\end{equation}
with initial data $F_0 = (E_0, H_0) \in  \hc = L^2(\Omega; \C^3) \times L^2(\Omega; \C^3).$
Here $\nu(x)$ is the unit outward normal at $x \in \Gamma$ pointing into $\Omega$, 
and $\gamma(x) \in C^{\infty}(\Gamma)$ satisfies $\gamma(x) > 0$ for all $ x \in \Gamma.$ The solution of the problem (\ref{eq:1.1}) is described  by a contraction semigroup 
$$(E, H)(t)  = V(t)F_0 = e^{tG_b} F_0,\: t \geq 0,$$
 where the generator $G_b$ is the operator
 $$G = \begin{pmatrix} 0 & \curl \\
 - \curl &  0\end{pmatrix}$$
  with  domain $D(G_b) \subset \mathcal H$
which is the closure in the graph norm $\| |u| \| = ( \|u\|^2 + \|G u\|^2)^{ 1/2}$ 
of functions $u = (v, w) \in C_{(0)}^{\infty} (\R^3; \C^3) \times C_{(0)}^{\infty} (\R^3; \C^3)$ satisfying the boundary condition $\nu \wedge (\nu \wedge v) + \gamma (\nu \wedge w) = 0$ on $\Gamma.$ \\

In \cite{CPR1} it was proved that the spectrum of $G_b$ in the open half plan $\{ z \in \C:\: \re z < 0\}$
is formed by isolated eigenvalues with finite multiplicities. Notice that if $G_b f = \lambda f$ with $\re \lambda < 0$, the solution $u(t, x) = V(t) f = e^{\lambda t} f(x) $ of (\ref{eq:1.1}) has exponentially decreasing global energy. Such solutions are called {\bf asymptotically disappearing} and they are important for the scattering problems (see \cite{CPR1}, \cite{CPR2}, \cite{P1}, \cite{P2}). In particular, the eigenvalues $\lambda$ with $\re \lambda \to -\infty$ imply  a very fast decay of the corresponding solutions. Let $\sigma_p(G_b)$ be the point spectrum of $G_b$. Concerning the scattering problems, we mention three properties related to the existence of eigenvalues of $G_b$. First, let $W_{\pm}$ be the wave operators
$$W_{-} f = \lim_{t \to + \infty} V(t) J U_0(-t) f,\:\: W_{+} f = \lim_{t \to + \infty} V^*(t) J U_0(t) f,$$
where $U_0(t)$ is the unitary group in $\hc_0 = L^2(\R^3; \C^3) \times L^2(\R^3; \C^3)$ related to the Cauchy problem for Maxwell system,  $J: \hc_0 \rightarrow \hc$ is a projection and $V^*(t) = e^{t G^*} $ is the adjoint semigroup (see \cite{LP}, \cite{P}). 
If $\sigma_p(G_p) \cap \{z: \Re z < 0\} \neq \emptyset,$ the wave operators $W_{\pm}$ are not  complete (see \cite{CPR1}), that is ${\rm Ran}\: W_{+}  \neq  {\rm Ran}\: W_{-}$ and we cannot define the scattering operator by $S = W_{+}^{-1} \circ W_{-}$. We may define the scattering operator by using another evolution operator (see \cite{LP}, \cite{P}). Second, in a suitable representation the scattering operator becomes an operator valued function $S(z): L^2(\S^2; \C^3) \rightarrow L^2(\S^2;\C^3), \: z \in \C,$ and Lax and Phillips (see \cite{LP}) proved that the existence of $z_0, \: \im z_0> 0,$ for which the kernel of $S(z_0)$ is not trivial implies  $\ii z_0 \in \sigma_p(G_b)$. The existence of such $z_0$ leads to problems in inverse scattering. Third, for dissipative systems Lax and Phillips developed a scattering theory in \cite{LP} and they introduced the representation of the energy space $\mathcal H$ as a direct sum $\hc = D_{a}^{-} \oplus K_{a} \oplus D_{a}^+$. A function $f$  is called {\it outgoing} (resp. {\it incoming}) if its component in $D_{a}^{-}$ (resp. $D_{a}^{+}$) is vanishing.  If $f$ is an eigenfunction with eigenvalue $\lambda \in \sigma_p(G_b),\: \Re \lambda < 0$, it is easy to see that  $f$ is incoming and, moreover, $V(t)f$ remains incoming for all $t \geq 0.$  On the other hand, $V^*(t) f$ is not converging to 0 as $t \to + \infty.$ In fact, assuming $V^*(t)f \to 0$ for $t \to +\infty$, by the result in \cite{CPR1} one deduces that $f$ must be disappearing, that is there exists $T > 0$ such that $V(t) f = 0$ for $t \geq T$ which is impossible for an eigenfunction.

The existence of infinite number eigenvalues of $G_b$ presents an interest for applications. However to our best knowledge this problem has been studied only for the ball $B_3 = \{x\in \R^3,\:|x| < 1\}$ assuming $\gamma$ constant (see \cite{CPR2}). It was proved in \cite{CPR2} that for $\gamma = 1$ there are no eigenvalues in $\{z \in \C: \Re z < 0\}$, while for $\gamma \neq 1$ there is always an infinite number of negative real eigenvalues $\lambda_j$ and with exception of one they satisfy the estimate 
\begin{equation} \label{eq:1.2}
\lambda_j  
\ \leq\
 - \frac{1}{\max \{ (\gamma_0 - 1), \sqrt{\gamma_0 - 1}\}} = -c_0 \,,
\end{equation}
where $\gamma_0 = \max\{\gamma, \frac{1}{\gamma}\}.$ On the other hand, a Weyl formula for the counting function of the negative eigenvalues of $G_b$ for $K = B_3$ and $\gamma \neq 1$ constant has been established in \cite{CP}.\\

 The distribution of the eigenvalues of $G_b$ in the complex plane has been studied in \cite{CPR2} and it was established that if $\gamma(x) \neq 1, \forall x \in \Gamma$, then for every $\epsilon > 0$ and every $M \in \N$, the eigenvalues lie in $\Lambda_{ \ep} \cup {\mathcal R}_{M}$, where 
$$\Lambda_{\ep} = \{z \in \C: \: |\Re z | \leq C_{\ep} (1 + |\im z|^{1/2 + \ep} ), \: \Re z < 0\},$$
$${\mathcal R}_M = \{z \in \C:\: |\im z | \leq C_M (1 + |\Re z|)^{-M} ,\: \Re z < 0\}.$$
 
An eigenvalue $\lambda_j \in \sigma_p(G_b) \cap \{ z: \: \Re z < 0\}$ has (algebraic) multiplicity given by
$${\rm mult}(\lambda_j)= {\rm tr}\: \frac{1}{2 \pi \ii} \int_{|\lambda_j - z|= \epsilon} (z - G_b)^{-1} dz,$$
where $0 < \epsilon \ll 1.$  Introduce the set
$$\Lambda := \{z \in \C:\: |\im z | \leq C_2 (1 + |\Re z|)^{-2} ,\: \Re z \leq -C_0 \leq -1\}.$$
We choose $C_0 \geq 2C_2$ and ${\mathcal R}_M \subset \Lambda,\: M \geq 2$ modulo a compact set containing a finite number eigenvalues.

Throughout this paper we assume that either $0 < \gamma(x) < 1, \forall x \in \Gamma$ or $1 < \gamma(x), \forall x \in \Gamma.$ Our purpose  is to prove the following
\begin{thm}
Let $\gamma(x)  \neq 1,\: \forall x \in \Gamma,$ and let $\gamma_0(x)  = \max \{\gamma(x) , \frac{1}{\gamma(x)}\}.$ Then the counting function of the eigenvalues in $\Lambda$ counted with their multiplicities for $r \to \infty$ has the asymptotic
\begin{equation} \label{eq:1.3} 
\sharp \{ \lambda_j \in \sigma_p(G_b) \cap \Lambda:\: |\lambda_j | \leq r, \: r \geq C_{\gamma_0}\} =  \frac{1}{4\pi}\Bigl( \int_{\Gamma} (\gamma_0^2(x) - 1)dS_x\Bigr) r^{2} + {\mathcal O}_{\gamma_0} (r).
\end{equation} 
\end{thm}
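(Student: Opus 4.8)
The plan is to reduce the eigenvalue equation to a spectral problem on the boundary surface $\Gamma$, to analyse the resulting operator semiclassically in the regime $\Re\lambda\to-\infty$, and to conclude with the sharp Weyl asymptotics for an elliptic pseudodifferential operator on $\Gamma$.

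\textbf{Step 1: reduction to the boundary.} For $\Re\lambda<0$ the equation $G_b f=\lambda f$ with $f=(E,H)\in\hc$ is equivalent to $\curl H=\lambda E$, $\curl E=-\lambda H$ in $\Omega$ (so $\dive E=\dive H=0$), together with $\nu\wedge(\nu\wedge E)+\gamma(\nu\wedge H)=0$ on $\Gamma$, and the condition $f\in L^{2}(\Omega)$ forces the exterior stationary solution to be the exponentially decaying one. Let $\nc(\lambda)$ be the exterior boundary operator sending the tangential trace $E_{\rm tan}|_\Gamma$ of such a solution to $(\nu\wedge H)|_\Gamma$; for $\Re\lambda<0$ this is a holomorphic family of elliptic, $\lambda$-dependent pseudodifferential operators on $\Gamma$. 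Since the boundary condition reads $E_{\rm tan}=\gamma(\nu\wedge H)$ on $\Gamma$, setting $\cc(\lambda):=I-\gamma\,\nc(\lambda)$ on tangential fields one has: $\lambda\in\sigma_p(G_b)$, $\Re\lambda<0$, iff $\cc(\lambda)$ is not invertible (the tangential trace of an eigenfunction does not vanish). Moreover, using the representation of $(z-G_b)^{-1}$ through $\cc(z)^{-1}$ as in \cite{CPR1} and the Gohberg--Sigal theory of the holomorphic Fredholm family $\cc(\lambda)$, the algebraic multiplicity ${\rm mult}(\lambda_j)$ coincides with the multiplicity of $\lambda_j$ as a characteristic value of $\cc(\lambda)$.

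\textbf{Step 2: semiclassical reduction.} For $\lambda\in\Lambda$ with $|\lambda|$ large put $h=|\lambda|^{-1}$, so that $\lambda=-h^{-1}\bigl(1+\oc(h^{M+2})\bigr)$. In boundary normal coordinates the exterior problem near $\Gamma$ is governed by the half-space model, in which a decaying solution with tangential frequency $\xi$ has normal decay rate $\kappa=\sqrt{\lambda^{2}+|\xi|_x^{2}}>0$; a direct computation shows that on the two polarizations the symbolic eigenvalues of $\nc(\lambda)$ are $-\kappa/\lambda$ and $-\lambda/\kappa$, whose product is $1$. Under the standing hypothesis that $\gamma-1$ does not change sign on $\Gamma$, exactly one of the equations $-\kappa/\lambda=\gamma^{-1}$ (if $\gamma<1$) and $-\lambda/\kappa=\gamma^{-1}$ (if $\gamma>1$) is relevant, and in both cases it is equivalent to $\kappa=\gamma_0(x)|\lambda|$, i.e. to $|\xi|_x^{2}=\lambda^{2}\bigl(\gamma_0^{2}(x)-1\bigr)$, where $\gamma_0(x)>1$ everywhere. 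Hence, for $h$ small, $\cc(\lambda)=\mathcal{A}(\lambda)\,\pc(\lambda)\,\mathcal{B}(\lambda)$ with $\mathcal{A}(\lambda),\mathcal{B}(\lambda)$ invertible $h$-pseudodifferential operators and $\pc(\lambda)$ a scalar $h$-pseudodifferential operator on $\Gamma$ with principal symbol $|\xi|_x^{2}-\lambda^{2}\bigl(\gamma_0^{2}(x)-1\bigr)=\bigl(\gamma_0^{2}(x)-1\bigr)\bigl(q(x,\xi)-\lambda^{2}\bigr)$, where $q(x,\xi):=|\xi|_x^{2}/\bigl(\gamma_0^{2}(x)-1\bigr)$. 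Therefore $\lambda\in\sigma_p(G_b)\cap\Lambda$ with $|\lambda|$ large iff $\lambda^{2}\in\sigma(\mathcal{Q})$, where $\mathcal{Q}$ is the classical second order elliptic pseudodifferential operator on $\Gamma$ with principal symbol $q$, and characteristic multiplicities of $\cc$ at $\lambda_j$ match the multiplicities of $\lambda_j^{2}$ in $\sigma(\mathcal{Q})$ because $\partial_\mu(\mu-\mathcal{Q})=I$. The lower order terms of $\pc(\lambda)$ and its super-polynomially small non-self-adjoint part do not influence the count in the range considered: one replaces $\mathcal{Q}$ by a genuinely self-adjoint elliptic operator with the same principal symbol and absorbs the difference in the remainder below.

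\textbf{Step 3: Weyl law and conclusion.} Since $\Gamma$ is a compact surface and $\mathcal{Q}$ is elliptic of order $2$, Hörmander's sharp Weyl asymptotics give
\[
\sharp\{\mu_j\in\sigma(\mathcal{Q}):\ \mu_j\le r^{2}\}=\frac{1}{(2\pi)^{2}}\,{\rm vol}\bigl\{(x,\xi)\in T^{*}\Gamma:\ q(x,\xi)\le r^{2}\bigr\}+\oc(r).
\]
For fixed $x$ the fibre $\{q(x,\xi)\le r^{2}\}$ is a disc of $|\cdot|_x$-radius $r\sqrt{\gamma_0^{2}(x)-1}$, of area $\pi r^{2}\bigl(\gamma_0^{2}(x)-1\bigr)$, so ${\rm vol}\{q\le r^{2}\}=\pi r^{2}\int_\Gamma\bigl(\gamma_0^{2}(x)-1\bigr)dS_x$ and the right-hand side equals $\frac{1}{4\pi}\bigl(\int_\Gamma(\gamma_0^{2}(x)-1)\,dS_x\bigr)r^{2}+\oc(r)$. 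Combining this with Steps 1--2, and noting that the finitely many eigenvalues with $|\lambda|$ below the semiclassical threshold contribute only $\oc(1)$, we obtain (\ref{eq:1.3}).

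\textbf{Main difficulty.} The decisive step is Step 2: building a semiclassical parametrix for the exterior Maxwell system that is uniform as $\Re\lambda\to-\infty$ — an elliptic, boundary-layer regime requiring exponentially weighted estimates and a careful treatment of the radiation condition — showing that $\nc(\lambda)$ is a genuine $h$-pseudodifferential operator, and establishing the elliptic factorization $\cc(\lambda)=\mathcal{A}\,\pc\,\mathcal{B}$. It is exactly here that the hypothesis that $\gamma-1$ has constant sign on $\Gamma$ is used, to exclude the degeneration of the symbol that would occur if $\gamma$ attained the value $1$. A subsidiary technical point is to keep the remainder in the Weyl law as sharp as $\oc(r)$ while passing from the operator pencil $\cc(\lambda)$ to the fixed operator $\mathcal{Q}$.
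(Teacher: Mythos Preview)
Your overall architecture—reduction to a boundary pencil, semiclassical analysis of the exterior Maxwell Dirichlet-to-Neumann map, and a Weyl count on $\Gamma$—is exactly the paper's strategy, and your symbol computation in Step~2 (eigenvalues $-\kappa/\lambda$, $-\lambda/\kappa$ of $\nc(\lambda)$, only one of which can vanish when $\gamma\neq 1$) is correct and matches the paper's diagonalisation of the matrix symbol $m=\frac{1}{z}(\rho I+\rho^{-1}\bc)$.

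There is, however, a genuine gap in Step~2. You assert two things without proof: (i) an elliptic factorisation $\cc(\lambda)=\mathcal{A}(\lambda)\,\pc(\lambda)\,\mathcal{B}(\lambda)$ with $\pc$ scalar, and (ii) that the characteristic values of the $\lambda$-pencil $\pc(\lambda)$ coincide, with the right multiplicities and to within an $\mathcal{O}(r)$ error in the count, with the eigenvalues $\lambda_j^2$ of a \emph{fixed} second-order operator $\mathcal{Q}$. Point (i) needs a global smooth frame for the eigendirections of the symbol; the paper supplies this explicitly via the unitary matrix $U(x',\xi')$ built from $\nu$, $\nu\wedge b$, $b$, and then takes a Schur complement rather than a factorisation. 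Point (ii) is the heart of the matter and cannot be waved away by ``absorb the difference in the remainder'': the sub-principal terms of $\pc(\lambda)$ are $\lambda$-dependent, the pencil is not linear in any spectral parameter, and there is no a~priori reason its zeros track those of $\mu-\mathcal{Q}$ to $\mathcal{O}(r)$ precision. The paper handles exactly this difficulty by the Sj\"ostrand--Vodev monotonicity method: one shows (Proposition~4.1) a differential inequality $h\,\partial_h Q(h)+C\,Q(h)\langle hD\rangle^{-1}Q(h)\ge \epsilon\langle hD\rangle$ for the self-adjoint model operator $Q(h)$, which forces each eigenvalue $\mu_k(h)$ to be strictly increasing near $0$ and hence to cross zero at a unique $h_k$. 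A trace identity (Propositions~2.1 and~5.2) then converts the count of $h_k\in(1/r,h_0]$ into the count of eigenvalues of $G_b$ in $\Lambda$, and the former equals the number of negative eigenvalues of $Q(1/r)$, to which the semiclassical Weyl formula applies directly. Your proposal contains no substitute for this monotonicity/bijection step.

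A smaller point: the paper works with $\pc(h)$ on all of $L^2(\Gamma;\C^3)$ and uses the Schur complement to peel off the elliptic block $r(h)$, arriving at a $2\times2$ operator $Q(h)$; you instead work directly on the tangent bundle and arrive at a scalar $\pc$. Both routes are viable and give the same leading coefficient, but the paper's choice is dictated by the form of Vodev's parametrix $T_N(h,z)$, which is naturally a $3\times3$ matrix operator. Also, where you invoke Gohberg--Sigal for the multiplicity matching, the paper proves a direct trace formula $\tr\frac{1}{2\pi i}\oint(\lambda-G_b)^{-1}d\lambda=\tr\frac{1}{2\pi i}\oint\cc(\lambda)^{-1}\dot\cc(\lambda)\,d\lambda$ by an explicit resolvent identity; this is equivalent in effect but avoids importing the abstract theory.
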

The proof of the above theorem follows the approach in \cite{SjV} and \cite{P2}. In comparison with \cite{P2}, we will discuss briefly some difficulties and new points. For the analysis of $\sigma_p(G_b)$ we prove in Section 2 a trace formula involving the operator $C(\lambda) f= \nc(\lambda) f + \frac{1}{\gamma(x)} (\nu \wedge f),$ where $\nc(\lambda) f = \nu \wedge H\vert_{\Gamma}$ and $(E, H)$ is the solution of the problem (\ref{eq:2.2}) with $U_1= U_2 = 0.$  Setting $\lambda = -\frac{1}{\th},\: \th = h( 1 + \ii t)$ with $0 < h \leq h_0,\: t \in \R,\:\: |t| \leq h^2,$ we are going to study the semiclassical problem (\ref{eq:2.8}) with $z = -\ii(1 + \ii t)^{-1}$. In a recent work Vodev \cite{V1} constructed a semiclassical parametrix for this problem assuming $\theta = |\im z| \geq h^{2/5- \epsilon},\: 0 < \epsilon \ll 1.$ Moreover, in \cite{V1} an approximation for $\nc(- \frac{1}{\th})$ has been obtained by a semiclassical pseudo-differential matrix valued operator.\\

We deal with the elliptic case, where $\theta  \geq 1 - h^2$. In this case according to the results in \cite{V1}, an approximation $T_N(h, z)(\nu \wedge f)$ of $\nc(- \frac{1}{\th})f$ can be constructed with a remainder having norm ${\mathcal O}(h^N)\| f\|^2$ choosing  $N \in \N$ very large. The principal symbol of $T_N(h, z)$ has matrix symbol $m = \frac{1}{z}\Bigl( \rho I + \frac{\bc}{\rho}\Bigr)$ (see Section 3), where $\bc$ is a symmetric matrix, $\rho = \sqrt{z^2 - r_0}$ and  $r_0(x', \xi')$ is the principal symbol of Laplace Beltrami operator $-h^2\Delta\vert_{\Gamma}$. To approximate $C(-\frac{1}{\th})$, we use the self-adjoint operator $\pc(h) = -T_N(h, -\ii) - \frac{1}{\gamma(x)}$. In  the case $\gamma_0(x) = \frac{1}{\gamma(x)} > 1, \: \forall x \in \Gamma,$ there exist values of $h$ for which $\pc(h)$ is not invertible. The semiclassical analysis of $\pc(h)$ is related to the eigenvalues of the principal symbol $-m- \gamma_0$ which has a double eigenvalue $\sqrt{1 + r_0} - \gamma_0$ and an eigenvalue $r(h) =(1 +  r_0)^{-1/2} - \gamma_0$. The symbol $r(h)$ is elliptic but $\lim_{|\xi'| \to \infty} r(h) = -\gamma_0$ and this leads to problems in the  semiclassical analysis of the spectrum of $\pc(h)$ (see Section 12 in \cite{DS} and hypothesis (H2)). To overcome this difficulty, we introduce a global diagonalisation of $m$ with a unitary matrix $U$ and write $(Op_h(U))^{-1} \pc(h) Op_h(U)$ in a block matrix form (see Section 4). We study the eigenvalues $\mu_k(h)$ of a self-adjoint operator $Q(h)$ and show that the invertibility of $Q(h)$ implies that of $\pc(h)$. This approach is more convenient that the investigation of ${\rm det}\: \pc(h)$. If $h_k, \: 0 < h_k \leq h_0,$ is such that  $\mu_k(h_k) = 0,$ then $Q(h_k)$ is not invertible and in this direction our analysis is very similar to that in \cite{SjV} and \cite{P2}. The next step is to express the trace formula involving $\pc(h)^{-1}$ with a trace one involving $Q(h)^{-1}$ (see Proposition 5.2). Finally, the problem is reduced to the  count of the negative eigenvalues of $Q(1/r)$ for $1/r < h_0.$ This strategy is not working if  $\gamma_0(x) = \gamma(x) > 1.$ To cover this case, we consider the problem (\ref{eq:3.10}) and the operator $\nc_1(- \frac{1}{\th})$ related to the solution of it.  Then we introduce the operators $C_1(h), \pc_1(h), Q_1(h)$. We study the eigenvalues of the self-adjoint operator $Q_1(h)$ and repeat the analysis in the case $0 < \gamma(x) <1.$ The eigenvalues of the semiclassical principal symbols of both operators  $Q(h)$ and $Q_1(h)$ are $\sqrt{1 + r_0} - \gamma_0$.\\
 
        The argument of our paper with technical complications can be applied to study the non homogenous Maxwell equations (see (\ref{eq:2.1}) for the notation) 
\begin{equation} \label{eq:1.4}
\begin{cases}\curl E =  - \lambda \mu(x) H,\: x \in \Omega,\\
 \curl H = \lambda \ep(x) E,\: \:x \in \Omega,  \\
\frac{1}{\gamma(x)} (\nu \wedge (\nu \wedge E)) + (\nu \wedge H)= 0\: {\rm for }\: x \in \Gamma,\\
(E, H): \ii\lambda-  {\rm outgoing}.
\end{cases}
\end{equation}        
Here $\ep(x) > 0,\: \mu(x) > 0$ are scalar valued functions in $C^{\infty}(\bar{\Omega})$ which are equal to constants $\ep_0,\: \mu_0$ for $|x| \geq c_0 > a.$ For this purpose it is necessary to generalise the results for eigenvalues free regions in \cite{CPR2} and to apply the construction in \cite{V1} concerning the non homogeneous case. \\        
        
   The paper is organised as follows. In Section 2 in  the case $0 < \gamma(x) < 1, \forall x \in \Gamma$ we introduce the operators $\nc(\lambda), C(\lambda),\: P(\lambda)$ and prove a trace formula (see Proposition 2.1). Similarly, in the case $\gamma(x) > 1,\forall x \in \Gamma,$ the operators $\nc_1(\lambda),\: C_1(\lambda), \pc_1(\lambda)$ are introduced. In Section 3 we collect some facts concerning the construction of a semiclassical parametrix for the problems (\ref{eq:2.8}), (\ref{eq:2.10}) build in \cite{V1}. Setting $\lambda = - \frac{1}{\th},\: \th = h ( 1 + \ii t), \: 0 < h \leq h_0$, we treat the case $z = \ii h \lambda =   -\ii(1 + \ii t)^{-1} $ with $|t| \leq h^2$ and this implies some simplifications. The self-adjoint operators $Q(h), Q_1(h)$ and their eigenvalues $\mu_k(h)$ are examined in Section 4. Finally, in Section 5 we compare the trace formulas  involving $\pc(h)$ and $\cc(h)$ and show that they differ by negligible terms. The proof of Theorem 1.1 is completed by the asymptotic of the negative eigenvalues of $Q(1/r), Q_1(1/r).$ 
   
 \section{Trace formula for Maxwell's equations}
\def\hn{h^{(1)}_n}

 An eigenfunction $D(G_b) \ni (E, H) \neq 0$ of $G_b$ with eigenvalue $\lambda \in \{z \in \C:\Re z < 0\}$
 satisfies
\begin{equation} \label{eq:2.1}
\begin{cases}\curl E =  - \lambda H,\: x \in \Omega,\\
 \curl H = \lambda E,\: \:x \in \Omega,  \\
\frac{1}{\gamma(x)} (\nu \wedge (\nu \wedge E)) + (\nu \wedge H)= 0\: {\rm for }\: x \in \Gamma,\\
(E, H): \ii\lambda-  {\rm outgoing}.
\end{cases}
\end{equation}
The $\ii \lambda$- outgoing condition means that every component of $E = (E_1, E_2, E_3)$ and $H = (H_1, H_2, H_3)$ satisfies the $\ii \lambda$-outgoing condition for the equation $(\Delta - \lambda^2)u = 0$, that is
$$\frac{d}{dr} \Bigl(E_j(r \omega)\Bigr)  -\lambda E_j(r \omega)  = {\mathcal O}\Bigl(\frac{1}{r^2}\Bigr) , \: j = 1, 2, 3,\:r \to \infty$$
uniformly with respect to $\omega \in S^2$ and the same condition holds for $ H_j, \: j = 1, 2, 3.$ This condition can be written in several equivalent forms and for Maxwell's equation it is known also as Silver-M\"uller radiation condition (see Remark 3.31 in \cite{KH}). Notice that we can present $E$ and $H$ by integrals involving the outgoing resolvent $(\Delta_0 - \lambda^2)^{-1} $ of the free Laplacian in $\R^3$ with kernel $R_0(x, y; \lambda) = \frac{e^{\lambda |x - y|}}{4 \pi |x - y|},\: x \neq y,$  and if $(E, H)$ satisfy the $\ii \lambda$- outgoing condition, we can apply the Green formula
$$\int_{\Omega} \dive (A \wedge B) dx = \int_{\Omega} (\la B, \curl A \ra - \la A, \curl B \ra) dx = \int_{\Gamma} \la( \nu \wedge B), A \ra dS_x,$$
where $\la . , . \ra$ is the scalar product in $\C^3.$\\

First we treat the case $0 < \gamma(x) < 1, \forall x \in \Gamma.$ The case $\gamma(x) > 1, \forall x \in \Gamma$ will be discussed at the end of this section. Introduce the spaces
$$\hc_s^t(\Gamma) = \{f \in H^s(\Gamma; \C^3): \: \langle \nu(x), f(x) \rangle = 0\}$$ and
 consider the boundary problem
 \begin{equation} \label{eq:2.2}
\begin{cases}\curl E =  -\lambda H + U_1,\: x \in \Omega,\\ 
 \curl H = \lambda E + U_2, \:x \in \Omega,  \\
\nu \wedge E= f\: {\rm for }\: x \in \Gamma,\\
(E, H): \ii \lambda-  {\rm outgoing} 
\end{cases}
\end{equation}
with $U_1, U_2  \in L^2(\Omega; \: \C^3),\: \dive U_1, \: \dive U_2 \in L^2(\Omega),\:f \in \hc_1^t(\Gamma).$
Consider the operator
$$\nc(\lambda) : \hc_1^t(\Gamma) \ni f \longrightarrow  \nu \wedge H\vert_{\Gamma} \in \hc_0^t(\Gamma),$$
$(E, H)$ being the solution of (\ref{eq:2.2}) with $U_1 = U_2 = 0.$  According to Theorem 3.1 in \cite{V1}, this operator is well defined and it plays the role of the Dirichlet-to-Neumann operator for the Helmoltz equation $(\Delta - \lambda^2)u = 0.$ By $\nc(\lambda)$, we write the boundary condition in (\ref{eq:2.1}) as follows
\begin{equation} \label{eq:2.3} 
C(\lambda)f: = \nc(\lambda)f + \frac{1}{\gamma(x)} (\nu \wedge f) = 0,\: \nu \wedge E\vert_{\Gamma} = f \in \hc_1^t(\Gamma).
\end{equation} 

Introduce the operator $P(\lambda) (f) := \nc(\lambda)(\nu \wedge f)$, that is $\nc(\lambda) f = - P(\lambda)(\nu \wedge f).$ Therefore,  since $\gamma_0(x) = \frac{1}{\gamma(x)}$ the condition (\ref{eq:2.3}) becomes
\begin{equation} \label{eq:2.4} 
\tilde{C}(\lambda)g : = P(\lambda) g- \gamma_0(x)  g   = 0,\: g = \nu \wedge f = - E_{tan} \vert_{\Gamma}
\end{equation} 
and $P(\lambda):\: \hc_1^t (\Gamma)\longrightarrow \hc_0^t(\Gamma).$ 
 For $\lambda \in \R^{-}$, it is easy to see that the operator $P(\lambda)$ is self-adjoint in $\hc_0^t.$ To do this, we must prove that for $u, v \in \hc_1^t (\Gamma)$ we have
\begin{eqnarray} \label{eq:2.5} 
-(P(\lambda) (\nu \wedge u), \nu \wedge v) = (\nc(\lambda) u, \nu \wedge v) \nonumber \\= (\nu \wedge u, \nc(\lambda) v) = -( \nu \wedge u, P(\lambda)( \nu \wedge v)),
\end{eqnarray} 
where $(. , .)$ is the scalar product in $\hc_0^t(\Gamma).$  
Let $(E, H)$ (resp. $(X, Y)$) be the solution of the problem (\ref{eq:2.2}) with $U_1 = U_2 = 0$ and $f$ replaced by $u$ (resp. $v$). By applying the Green formula, we get 
$$\lambda \int_{\Omega} (\la E, X \ra + \la Y, H \ra )dx = \int_{\Omega} \la E, \curl Y \ra dx - \int_{\Omega}\la Y, \curl E \ra dx $$
$$= -\int_{\Gamma} \la \nu \wedge Y, E \ra =  \int_{\Gamma} \overline{\la Y, u \ra}=  \int_{\Gamma} \overline{\la \nu \wedge Y, \nu \wedge u\ra} = (\nu \wedge u, \nc(\lambda) v).$$
Similarly,
$$\lambda \int_{\Omega} ( \la X, E \ra + \la H, Y \ra)  dx = \int_{\Omega} \la X, \curl H \ra dx - \int_{\Omega}\la H, \curl X \ra dx $$
$$= -\int_{\Gamma} \la \nu \wedge H, X \ra =  \int_{\Gamma} \overline{\la H, v \ra} = \int_{\Gamma} \overline{\la \nu \wedge H,  \nu \wedge v \ra}  =  \overline{( \nc(\lambda) u, \nu \wedge v) }$$
and for real $\lambda$  we obtain (\ref{eq:2.5}).\\

Let $(E, H) = R_C(\lambda) (U_1, U_2)$ be the solution of (\ref{eq:2.2}) with $f = 0.$ Then $R_C(\lambda) = (G_C - \lambda)^{-1}$, where $G_C$ is the operator $G$ with boundary condition $\nu \wedge E\vert_{\Gamma} = 0$ and domain $D(G_C) \subset \hc.$ The operator $\ii G_C$ is self-adjoint and $(G_C - \lambda)^{-1}$ is analytic operator valued function for $\Re \lambda < 0.$ On the other hand, it is easy to express $\nc(\lambda)$ by $R_C(\lambda).$ Given $f \in \hc_1^t(\Gamma)$, let $e_0(f) \in H^{3/2} (\Omega; \C^3)$ be an extension of $-(\nu \wedge f)$ with compact support. Consider
$$\begin{pmatrix} u \\v \end{pmatrix} = - R_C(\lambda)\Bigl( ( G - \lambda) \begin{pmatrix} e_0(f) \\ 0 \end{pmatrix} \Bigr)+ \begin{pmatrix} e_0(f) \\ 0 \end{pmatrix}. $$
Then $(u, v)$ satisfies (\ref{eq:2.2}) with $U_1 = U_2 = 0$ and $\nc(\lambda)f = \nu \wedge v\vert_{\Gamma}$ implies that $\nc(\lambda)$ is analytic for $\Re \lambda < 0.$ Consequently, $C(\lambda) : H_1^t(\Gamma)  \rightarrow H_0^t(\Gamma)$ is also analytic for $\Re \lambda< 0.$ On the other hand, for $\Re \lambda < 0$ the operator $\nc(\lambda)$ is invertible. Indeed, if $\nc(\lambda) f = 0,$ let $(E, H)$ be a solution of the problem
\begin{equation} \label{eq:2.6}
\begin{cases}\curl E =  -\lambda H,\: x \in \Omega,\\ 
 \curl H = \lambda E, \:x \in \Omega,  \\
\nu \wedge H= 0\: {\rm for }\: x \in \Gamma,\\
(E, H): \ii \lambda-  {\rm outgoing}.
\end{cases}
\end{equation}
 By Green formula one gets
$$\bar{\lambda} \int_{\Omega}( |E|^2 + |H|^2) dx =  \int_{\Omega} \la E, \curl H \ra dx- \int_{\Omega} \la H, \curl E \ra dx = -\int_{\Gamma}\la  \nu \wedge H, E \ra = 0.$$
This implies $E = H = 0$, hence $ f = 0.$ Thus we conclude that  for $\Re\lambda < 0$ the operator $\nc(\lambda)^{-1}$ is analytic and 
$$C(\lambda)f  = \nc(\lambda)\Bigl ( I + \nc(\lambda)^{-1} \gamma_0(x)  i_{\nu} \Bigr) f. $$
Here $i_{\nu}(x)$ is a $(3 \times 3)$ matrix such that  $i_{\nu} (x)f = \nu(x) \wedge f.$ 
The operator $\nc(\lambda)^{-1}: H_0^t (\Gamma) \rightarrow H_1^t (\Gamma)$ is compact and by the analytic Fredholm theorem one deduces that
$$ C (\lambda)^{-1}   = \Bigl ( I + \nc(\lambda)^{-1} \gamma_0(x) i_{\nu} \Bigr)^{-1} \nc (\lambda)^{-1}  $$
is a meromorphic operator valued function.

    To establish a trace formula involving $( G_b -\lambda)^{-1} $, consider
    $(G_b -\lambda) (u, v) = (F_1, F_2) = X,\: (u, v) \in D(G_b).$ Then
    $$ \begin{cases} \curl u = -\lambda v + F_2, \: x \in \Omega,\\
    \curl v = \lambda u + F_1,\: x \in \Omega \end{cases}$$
and $(u, v) =  R_C(\lambda) X  +  K(\lambda) f = (G_b - \lambda)^{-1} X,$
where $K(\lambda)f$ is solution of  (\ref{eq:2.2}) with $U_1 = U_2 = 0$. Let $R_C(\lambda) X = ((R_C(\lambda) X)_1, (R_C(\lambda) X)_2).$ Notice that for $\Re\lambda < 0$, $(R_C(\lambda)X)_j,\: j = 1, 2 ,$ are analytic vector valued functions. To satisfy the boundary condition, we must have
$$\gamma_0(x)(\nu \wedge f )+ \Bigl(\nc(\lambda) (f )+  \nu \wedge (R_C(\lambda) X )_2\vert_{\Gamma} \Bigr) = 0,$$
hence 
$$f =   -C(\lambda)^{-1} \Bigl(\nu \wedge (R_C(\lambda)X)_2\vert_{\Gamma}\Bigr),$$
provided that $C(\lambda)^{-1}$ exists.

Assuming that $C(\lambda)^{-1}$ has no poles on a closed positively oriented curve $\delta \subset \{z \in \C: \: \Re z < 0\}$, we apply Lemma 2.2  in \cite{SjV} and exploit the cyclicity of  the trace to conclude that the operators
$$ \mathcal H \ni X \longrightarrow  -\int_{\delta}  K(\mu)\Bigl(  C(\lambda)^{-1} \Bigl(\nu \wedge (R_C(\lambda)X)_2\vert_{\Gamma}\Bigr)\Bigr) d\lambda \in \mathcal H $$
and
$$H_1^1(\Gamma) \ni w  \longrightarrow  -\int_{\delta} C(\lambda)^{-1} \Bigl(\nu \wedge  \Bigl(R_C(\lambda)(K(\lambda) (w))\Bigr)_2\big\vert_{\Gamma}\Bigr)d\lambda \in H_1^t(\Gamma)$$
have the same traces. 
On the other hand, 
$$\begin{cases} (G - \lambda) \frac{\partial K(\lambda) (w) }{\partial \lambda} =  K(\lambda)(w),\\
\nu \wedge \Bigl(\frac{\partial K(\lambda) (w)}{\partial \lambda}\Bigr)_1\big\vert_{\Gamma} = 0.\end{cases}.$$
This implies $\Bigl(R_C(\lambda) (K(\lambda) (w) )\Bigr)_2\big\vert_{\Gamma} = \Bigl(\frac{\partial K(\lambda) (w)}{\partial \lambda}\Bigr)_ 2\big\vert_{\Gamma} $
and
$$\nu \wedge \Bigl(\frac{\partial K(\lambda) (w)}{\partial \lambda}\Bigr)_ 2\big\vert_{\Gamma} =  \frac{\partial \nc(\lambda) w}{\partial \lambda} =  \frac{\partial C(\lambda)(w)}{\partial \lambda}. $$
The integrals involving the analytic terms $(R_C(\lambda) X)_j, \: j = 1, 2,$ vanish and we obtain the following
\begin{prop}
Let $0 < \gamma(x) < 1,\:\forall x \in \Gamma$ and let $\delta \subset \{z \in \C: \: \re z < 0\}$ be a closed positively oriented curve without self intersections such that $ C(\lambda)^{-1}$ has no poles on $\delta.$ Then
\begin{equation} \label{eq:2.7}
\tr_{\mathcal H} \frac{1}{2 \pi \ii} \int_{\delta} (\lambda - G_b)^{-1} d\lambda = \tr_{\mathcal H_1^t(\Gamma) } \frac{1}{2 \pi \ii} \int_{\delta}  C(\lambda)^{-1} \frac{d  C(\lambda)}{d \lambda} d\lambda. 
\end{equation}
\end{prop}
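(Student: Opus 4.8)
The plan is to establish the identity (\ref{eq:2.7}) by relating the multiplicity of an eigenvalue $\lambda_j$ of $G_b$, expressed as $\tr\frac{1}{2\pi\ii}\int_{|\lambda-\lambda_j|=\epsilon}(\lambda-G_b)^{-1}d\lambda$, to the multiplicity of the pole of $C(\lambda)^{-1}$ at $\lambda_j$, expressed via the logarithmic derivative $\tr\frac{1}{2\pi\ii}\int C(\lambda)^{-1}\frac{dC(\lambda)}{d\lambda}\,d\lambda$. The key structural input has already been set up in the paragraphs preceding the statement: the resolvent decomposition $(G_b-\lambda)^{-1}X = R_C(\lambda)X + K(\lambda)f$ with $f = -C(\lambda)^{-1}\bigl(\nu\wedge(R_C(\lambda)X)_2|_\Gamma\bigr)$, the analyticity of $R_C(\lambda)$ and of the components $(R_C(\lambda)X)_j$ for $\Re\lambda<0$, and the meromorphy of $C(\lambda)^{-1}$. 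So first I would substitute this decomposition into $\frac{1}{2\pi\ii}\int_\delta(\lambda-G_b)^{-1}d\lambda$; since $R_C(\lambda)$ is holomorphic inside $\delta$, that term integrates to zero, and one is left with the contour integral of $X\mapsto K(\lambda)\bigl(C(\lambda)^{-1}(\nu\wedge(R_C(\lambda)X)_2|_\Gamma)\bigr)$ (up to sign and the $2\pi\ii$ normalization).

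Next I would invoke Lemma 2.2 of \cite{SjV} together with the cyclicity of the trace, exactly as indicated in the excerpt, to move from the trace on $\mathcal H$ to a trace on the boundary space $H_1^t(\Gamma)$: the operator
$$\mathcal H\ni X\longmapsto -\int_\delta K(\mu)\Bigl(C(\lambda)^{-1}\bigl(\nu\wedge(R_C(\lambda)X)_2|_\Gamma\bigr)\Bigr)d\lambda$$
and the operator
$$H_1^t(\Gamma)\ni w\longmapsto -\int_\delta C(\lambda)^{-1}\Bigl(\nu\wedge\bigl(R_C(\lambda)(K(\lambda)(w))\bigr)_2|_\Gamma\Bigr)d\lambda$$
have equal traces. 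The point of Lemma 2.2 is that if $A(\lambda)$, $B(\lambda)$ are suitable operator families then $\int_\delta A(\lambda)B(\lambda)\,d\lambda$ and $\int_\delta B(\lambda)A(\lambda)\,d\lambda$ are trace class with the same trace; here $A(\lambda)$ is roughly $K(\lambda)$ composed with boundary data and $B(\lambda)$ is $C(\lambda)^{-1}$ composed with the restriction map, and one must check the mapping/regularity hypotheses (the gain of smoothing from $\nc(\lambda)^{-1}$ and from the trace onto $\Gamma$ is what makes the composition trace class).

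Then I would simplify the boundary-side operator. Using that $\frac{\partial K(\lambda)(w)}{\partial\lambda}$ solves $(G-\lambda)\partial_\lambda K(\lambda)(w) = K(\lambda)(w)$ with $\nu\wedge(\partial_\lambda K(\lambda)(w))_1|_\Gamma = 0$ — which is precisely the defining data of $R_C(\lambda)$ applied to $K(\lambda)(w)$ — one gets $\bigl(R_C(\lambda)(K(\lambda)(w))\bigr)_2|_\Gamma = \bigl(\partial_\lambda K(\lambda)(w)\bigr)_2|_\Gamma$, and hence $\nu\wedge\bigl(\partial_\lambda K(\lambda)(w)\bigr)_2|_\Gamma = \frac{\partial\nc(\lambda)w}{\partial\lambda} = \frac{\partial C(\lambda)(w)}{\partial\lambda}$ (the last equality because the difference $C(\lambda)-\nc(\lambda) = \frac{1}{\gamma}i_\nu$ is $\lambda$-independent). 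Substituting this, the boundary operator becomes $-\int_\delta C(\lambda)^{-1}\frac{dC(\lambda)}{d\lambda}\,d\lambda$, and dividing through by $2\pi\ii$ with the correct orientation of $\delta$ gives (\ref{eq:2.7}). I expect the main obstacle to be the careful justification of the trace-class property and the application of Lemma 2.2 of \cite{SjV}: one has to verify that $C(\lambda)^{-1}$, the restriction $(R_C(\lambda)\cdot)_2|_\Gamma$, and $K(\lambda)$ fit the functional-analytic framework of that lemma (appropriate Sobolev mapping properties on $\Gamma$, analyticity on $\delta$, and the relevant compactness/trace-class estimates), and to keep track of the domains $\hc_s^t(\Gamma)$ and the tangential constraint $\langle\nu,f\rangle=0$ throughout; the rest is the algebraic manipulation of the resolvent identity and the differentiation of $K(\lambda)$, which is routine once the setup is in place.
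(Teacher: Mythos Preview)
Your proposal is correct and follows essentially the same approach as the paper: the argument in the paper is in fact laid out in the paragraphs immediately preceding the proposition (the resolvent decomposition via $R_C(\lambda)$ and $K(\lambda)$, the trace swap via Lemma~2.2 of \cite{SjV}, and the identification $\nu\wedge(R_C(\lambda)K(\lambda)w)_2|_\Gamma=\partial_\lambda C(\lambda)w$), and you have reproduced that argument faithfully, including the correct observation that the analytic $R_C(\lambda)$-terms integrate to zero.
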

The left hand side of (\ref{eq:2.7}) is equal to the number of the eigenvalues of $G_b$ in the domain bounded by $\delta$ counted with their multiplicities.
Set $\lambda = - \frac{1}{\tilde{h}}$ with  $0 < \re \tilde{h} \ll 1$. For $\lambda \in \Lambda$ we have $\tilde{h} \in L,$ where
$$L: = \{\tilde{h} \in \C: \: |\im \tilde{h} | \leq C_1 |\tilde{h}|^4,\: |\tilde{h}| \leq C_0^{-1}, \: \re \tilde{h} > 0\}.$$
Write $\tilde{h} = h( 1 + \ii t),\: 0 < h \leq h_0 \leq C_0^{-1},\: t \in \R.$ Then for $\th \in L$, it is easy to see that $|t| \leq h^2$ for $\tilde{h} \in L$
and the problem (\ref{eq:2.2}) with $U_1 = U_2 = 0$  becomes
\begin{equation} \label{eq:2.8}
\begin{cases}-\ii  h\: \curl E =  z H,\: x \in \Omega,\\
- \ii h\: \curl H  = - z E, \: x \in \Omega,\\
 \nu \wedge E = f,\: x \in \Gamma,\\
(E, H) - {\rm outgoing} \end{cases}
\end{equation} 
with $-\ii z = h \lambda, \:z = -\ii ( 1+ \ii t)^{-1}.$ We introduce the operator $ \cc(\th) = C(- \th^{-1})$ and the trace formula is transformed in
\begin{equation} \label{eq:2.9} 
\tr_{\mathcal H} \frac{1}{2 \pi \ii} \int_{\delta} (\lambda - G_b)^{-1} d\lambda = \tr_{\mathcal H_1^t(\gamma) } \frac{1}{2 \pi \ii} \int_{\tilde{\delta}}  \cc(\th)^{-1} \frac{d \cc(\th)}{d \th} d\th, 
\end{equation}
where $\tilde{\delta} = \{ z \in \C: \: z = - \frac{1}{w},\: w \in \delta\}.$\\

    To deal with the case $\gamma(x) > 1, \: \forall x \in \Gamma$, we write the boundary condition in (\ref{eq:1.1}) in the form
    $$- (\nu \wedge E) + \gamma_0(x) (\nu \wedge (\nu \wedge H)) = 0,\: x \in \Gamma.$$
Consider the boundary problem
 \begin{equation} \label{eq:2.10}
\begin{cases}\curl E =  -\lambda H,\: x \in \Omega,\\ 
 \curl H = \lambda E, \:x \in \Omega,  \\
\nu \wedge H= f\: {\rm for }\: x \in \Gamma,\\
(E, H): \ii \lambda-  {\rm outgoing} 
\end{cases}
\end{equation}
and introduce the operator
$$\nc_1(\lambda) : \hc_1^t(\Gamma)\ni f \longrightarrow  \nu \wedge E\vert_{\Gamma} \in \hc_0^t(\Gamma),$$
where $(E, H)$ is the solution of (\ref{eq:2.10}). The above boundary condition becomes
$$C_1(\lambda) : = \nc_1(\lambda) f - \gamma_0(x) (\nu \wedge f) = 0,\: f = \nu \wedge H\vert_{\Gamma}.$$
Now we introduce the operator 
$P_1(\lambda) f= - \nc_1(\lambda)(\nu \wedge f)$  and write the boundary condition as
\begin{equation} \label{eq:2.11}
\tilde{C}_1(\lambda)f: = P_1(\lambda) g - \gamma_0(x) g = 0, \: x \in \Gamma ,\: g =  -H_{tan} \vert_{\Gamma}.
\end{equation} 
Comparing (\ref{eq:2.11}) with (\ref{eq:2.4}), we see that both boundary conditions are writen  by $\gamma_0(x).$  Clearly, we may repeat the above argument and obtain a trace formula involving $C_1(\lambda)^{-1} $ and $\frac{d}{d\lambda} C_1(\lambda).$

 \section{Semiclassical parametrix in the elliptic region}
 In this section we will collect some results in \cite{V1} concerning the construction of a semi-classical parametrix of the problem (\ref{eq:2.8}) and we refer to this work for more details.
 Let $\theta = |\im z| = \frac{1}{1 + t^2} \leq 1.$ Then the condition $\theta > h^{2/5 - \ep},\: 0 < \ep \ll 1$ in \cite{V1} is trivially satisfied for small $h_0.$ Moreover, $\theta \geq 1 - t^2 \geq 1 - h_0^2$ so $\theta$ has lower bound independent of $h$. This simplifies the construction in \cite{V1}. In the exposition we will use $h$-pseudo-differential operators
 and we refer to \cite{DS} for more details.
  Let $(x_1, x')$ be local geodesic coordinates in a small neighbourhood $\mathcal U \subset \R^3$ of $y_0 \in \Gamma$. We set $x_1 = \dist(y, \Gamma),\: x'= s^{-1}(y)$, where $x'= (x_2, x_3)$ are local coordinates in a neighborhood $\mathcal U_0 \subset \R^2$ of (0, 0)  and $s: {\mathcal U}_0 \rightarrow \mathcal U \cap \Gamma$  is a diffeomorphism. Set $\nu(x') = \nu(s(x')) =(\nu_1(x'), \nu_2(x'), \nu_3(x')).$   Then $y = s(x') + x_1 \nu(x')$
  and (see Section 2 in \cite{CPR2} and  Section 2 in \cite{V1})
    $$\frac{\partial}{\partial y_j} = \nu_j(x') \frac{\partial}{\partial x_1} + \sum_{k = 2}^3 \alpha_{j, k} (x) \frac{\partial}{\partial x_k},\: j = 1, 2, 3.$$
  The functions $\alpha_{j, k}(x)$ are determined as follows.
  Let $\zeta_1 = (1, 0, 0),\:\zeta_2 = (0, 1, 0), \: \zeta_3 = (0, 0, 1)$ be the standard orthonormal basis in $\R^3$ and let $d(x)$ be a smooth matrix valued function such that 
  $$d(x) \zeta_1 = \nu(x'), d(x) \zeta_k = (\alpha_{1, k}(x), \alpha_{2, k}(x), \alpha_{3, k}(x)), \: k = 2, 3$$
  and $\nabla_y = d(x) \nabla_x.$
   Denote by $\xi= (\xi_1, \xi')$ the dual variables of $(x_1,x')$. Then the symbol of the operator $- \ii\: \nabla\vert_{x_1 = 0}$ in the coordinates $(x, \xi)$ has the form $ \xi_1 \nu(x') + \beta(x', \xi'),$  
  where $\beta(x', \xi')$ is vector valued symbol given by
  $$\beta(x', \xi') = \sum_{k = 2}^3 \xi_k d(0, x') \zeta_k= \Bigl(\sum_{k= 2}^3 \xi_k \alpha_{j, k}(x) \Bigl)_{j = 1, 2, 3}$$
  and $\la \nu(x'), \beta(x', \xi')\ra = 0.$ The principal symbol of the operator $-\Delta\vert_{x_1 = 0}$ becomes
  $$\xi_1^2 + \la \beta(x', \xi') , \beta(x', \xi') \ra,$$
  while the principal symbol of the Laplace-Beltrami operator $- \Delta_{\Gamma}$ has the form 
  $$r_0(x', \xi') = \la \beta(x', \xi'), \beta(x', \xi')\ra.$$
  It is important to note that $\beta(x', \xi')$ is defined globally and it is invariant when we change the coordinates $x'$. In fact if $\tilde{x}'$ are new coordinates, $ x_1 = \tilde{x}_1$  and
  $$y = \tilde{s}(\tilde{x}') + x_1 \nu(\tilde{x})= s(x') + x_1 \nu(x'),$$
  in the intersection of the domains $s(\mathcal U_0) \cap \tilde{s}(\tilde{\mathcal U}_0),$ where the coordinates $x'$ and $\tilde{x}'$ are defined, then $\nu(x') = \nu(\tilde{x}').$ From the equality $\nabla\vert_{x_1 = 0} = \nabla\vert_{\tilde{x}_1 = 0},$ we deduce
  $\beta(x', \xi') = \beta (\tilde{x}', \tilde{\xi}').$
  
    Let $a(x, \xi'; h) \in C^{\infty}(T^*(\Gamma) \times (0, h_0]).$ 
 Given $k \in \R, 0 < \delta < 1/2$, denote by ${\mathcal S}^{k}_{\delta}$ the set of symbols so that 
$$
\big |\pa_{x'}^{\alpha} \pa_{\xi'}^{\beta} a(x, \xi'; h)\big| \leq C_{\alpha, \beta} h^{-\delta(|\alpha| + |\beta|}  \la \xi' \ra^{k - |\beta|},\: \forall \alpha, \forall\beta,\quad
  (x', \xi) \in T^*(\Gamma)
 $$
 with $\la \xi ' \ra = ( 1 + |\xi'|^2)^{1/2}$ and constants $C_{\alpha, \beta}$ independent of $h$. 
A matrix symbol $m$ belongs to ${\mathcal S}^k_{\delta}$ if all entries of $m$ are in the class ${\mathcal S}^k_{\delta} .$ The $h-$pseudo-differential operator with symbol $a(x, \xi; h)$ acts by 
$$
(Op_h(a) f)(x) :\:=\
 (2 \pi h)^{-2}\iint_{T^*(\Gamma)} e^{\ii \langle y' - x ', \xi\rangle /h} a(x, \xi'; h) f(y) d \xi' dy'.
$$
By using the change $\xi'= h \eta'$, the operator can be written also as a classical pseudo-differential operator 
$$
(Op_h(a) f)(x) :\:=\
 (2 \pi)^{-2}\iint_{T^*(\Gamma)} e^{\ii \langle y' - x ', \xi\rangle} a(x, h\xi'; h) f(y) d \xi' dy'.
$$
Next for a positive function $\omega( x', \xi')  > 0$ we define the space of symbols $a(x', \xi';  h) \in S^k_{\delta_1, \delta_2}(\omega)$ for which 
$$
\big |\pa_{x'}^{\alpha} \pa_{\xi'}^{\beta} a(x, \xi'; h)\big | \leq C_{\alpha, \beta} \omega^{k -\delta_1|\alpha| - \delta_2 |\beta|} ,\: \forall \alpha, \forall\beta,\quad
  (x', \xi') \in T^*(\Gamma).
 $$
We denote $S^k_{\delta_1, \delta_2} = S^k_{\delta_1, \delta_2} (\la \xi'\ra)$ and introduce the norm 
$$\|u\|_{H^k_h(\Gamma)} : = \|Op_h( \la \xi'\ra^k) u \|_{L^2(\Gamma)}.$$
 Let $\rho(x', \xi', z) = \sqrt{z^2 - r_0(x', \xi')},\: \im \rho > 0 ,$ be the root of the equation $ \xi_1^2 + r_0(x', \xi') - z^2 = 0$ with respect to $\xi_1.$ Set $z = -\ii + 	t(h),\: t(h) = {\mathcal O}(h^2).$ We have $\rho \in \mathcal S^1_{0}$,
 $$\sqrt{z^2 - r_0} = \ii \sqrt{1 + r_0}- \frac{{\mathcal O}(h^2)}{\ii \sqrt{1 + 2 \ii t(h) - t^2(h) + r_0} + \ii \sqrt{ 1+ r_0}}$$
 and $\rho - \ii \sqrt{1 + r_0} \in \mathcal S^{-1}_0.$\\
 
      The local parametrix of (\ref{eq:2.8}) constructed in \cite{V1} in local coordinates $(x_1, x')$ has the form
 $$\tilde{E} = (2\pi h)^{-2} \iint e^{\frac{\ii}{h}(\la y', \xi'\ra  + \varphi(x, \xi', z))} \phi_0^2 (x_1/ \delta) a(x, y', \xi', z, h) d\xi'dy',$$  
  $$\tilde{H} = (2\pi h)^{-2} \iint e^{\frac{\ii}{h}(\la y', \xi'\ra  + \varphi(x, \xi', z))} \phi_0^2 (x_1/ \delta) b(x, y', \xi', z, h) d\xi'dy',$$ 
 where $\phi_0(s) \in C_0^{\infty}(\R)$ is equal to 1 for $|s| \leq 1$ and to 0 for $|s| \geq 2$  and $0 < \delta \ll 1.$ Set $\chi(x_1) = \phi_0^2(x_1/\delta).$ The phase function $\varphi$ 
 satisfies for $N$ large the equation 
 $$\la d\nabla_x \varphi, d\nabla_x \varphi\ra - z^2 \varphi = x_1^N \Phi$$
 and has the form 
 $$\varphi = \sum_{k= 0}^{N-1} x_1^k \varphi_k( x', \xi', z), \: \varphi_0 = - \la x', \xi'\ra,\: \varphi_1 = \rho.$$
 Moreover, for $\delta$ small enough we have $\im \varphi \geq x_1 \im \rho/2$ for $0 \leq x_1 \leq 2\delta.$
  The construction of $\varphi$ is given in \cite{V}, \cite{V1}. For $ z = -\ii$ we have $\varphi = - \la x'.\xi'\ra + \ii \tilde{\varphi}$ with real valued phase $\tilde{\varphi}$ (see \cite{V} and Section 3 in \cite{P2}). 
  Introduce  a function $\eta \in C^{\infty}(T^*(\Gamma))$ such that $\eta = 1$ for $ r_0 \leq C_0,\: \eta = 0$ for $r_0 \geq 2C_0$, where $C_0 > 0$ is independent on $h$. Choosing $C_0$ big enough, one arranges the estimates
  \begin{equation} \label{eq:3.1} 
  \begin{aligned} 
  C_1 \leq |\rho| \leq C_2,\: \im \rho \geq C_3, \: (x', \xi') \in {\rm supp}\: \eta,\\
  |\rho| \geq \im \rho \geq C_4|\xi'|,\: (x', \xi') \in {\rm supp}\: (1 - \eta)
  \end{aligned} 
  \end{equation} 
  with positive constants $C_j > 0.$  Following \cite{V1}, we say that a symbol $\omega \in C^{\infty}(T^*(\Gamma))$ is in the class $S^{k_1}_{\delta_1, \delta_2} (\omega_1) + S^{k_2}_{\delta_3, \delta_4} (\omega_2)$  if $\eta \omega \in S^{k_1}_{\delta_1, \delta_2} (\omega_1)$ and $(1 - \eta) \omega \in S^{k_2}_{\delta_3, \delta_4} (\omega_2).$ The amplitudes $a$ and $b$ have the form $a = \sum_{j =0}^{N- 1} h^j a_j,\: b = \sum_{j= 0}^{N- 1} h^j b_j$ and $a_j, b_j$ for $0 \leq j \leq N- 1$ satisfy the system
  \begin{equation} \label{eq:3.2} 
 \begin{cases} (d\nabla_x \varphi) \wedge a_j - z b_j = \ii(d \nabla_x) \wedge a_{j-1} + x_1^N \Psi_j,\\
 (d \nabla_x \varphi) \wedge b_j + z a_j = \ii (d \nabla_x) \wedge b_{j-1} + x_1^N \tilde{\Psi}_j,\\
 \nu \wedge a_j = \begin{cases} g, \: j = 0,\\ 0, \: j \geq 1 \end{cases}  \: {\rm on}\: x_1 = 0,
 \end{cases}
 \end{equation}  
 where $a_{-1} = b_{-1} = 0$ and 
 $$g=  - \nu(x') \wedge ( \nu(y') \wedge f(y')) = f(y') - (v(x') - v(y')) \wedge (\nu(y') \wedge f(y')).$$ 
  On the other hand,  the function $a_j, b_j$ have the presentation
  $$a_j = \sum_{k= 0}^{N- 1} x_1^k a_{j, k},\: b_j = \sum_{k = 0}^{N-1} x_1^k b_{j, k}.$$
  The symbols $a_{j, k}, b_{j, k}$ are expressed by terms involving $g$. Moreover,
  $$a_{j, k} = A_{j, k}(x', \xi') \tilde{f}(y'),\: b_{j, k} = B_{j, k}(x', \xi') \tilde{f}(y'),$$
  where $\tilde{f}(y') = \nu(y') \wedge f(y') = i_{\nu}(y') f(y')$ with a $(3 \times 3)$ matrix $i_{\nu} = \sum_{j = 1}^3 \nu_j I_j$, $I_j$ being  $(3 \times 3)$ constant matrices. Here $A_{j, k}, B_{j, k}$ are smooth matrix valued functions.
  The important point proved in Lemma 4.3 in \cite{V1} is that we have the properties
  \begin{equation} \label{eq:3.3} 
  \begin{aligned}
  A_{j, k} \in S^{-1 -3k-  5j}_{2,2}(|\rho|) + S^{-j}_{0, 1}(|\rho|),\: j \geq 0, \: k \geq 0,\\
  B_{j, k} \in S^{-1 -3k- 5j}_{2,2}(|\rho|) + S^{1-j}_{0, 1}(|\rho|),\: j \geq 0, \: k \geq 0.  
  \end{aligned}
\end{equation} 
  Since by (\ref{eq:3.1}), the function $|\rho|$ is bounded from below for $(x', \xi') \in {\rm supp} \: \eta$, in the above properties we may replace absorb $S^{-1-3k-5j}_{2,2}(|\rho|) $ and obtain the class  $S^{-j}_{0, 1}(|\rho|)$ (resp. $S^{1- j}_{0, 1}(|\rho|) $) for all $(x', \xi')$. For the principal symbols $a_{0, 0},\: b_{0,0}$ we have form (\ref{eq:3.2}) the system
  \begin{equation} \label{eq:3.4}
  \begin{cases} \psi_0 \wedge a_{0, 0} - z b_{0, 0} = 0,\\
  \psi_0 \wedge b_{0, 0} + z a_{0,0} = 0,\\
  \nu \wedge a_{0, 0} = g,
  \end{cases}
  \end{equation} 
  with $\psi_0 = d(0, x') \nabla_x \varphi\vert_{x_1 = 0} = \rho \nu - \beta.$ The solution of (\ref{eq:3.4}) is given by (4.4) in \cite{V1} and one has
  \begin{equation}\label{eq:3.5} 
  \begin{aligned}
  a_{0, 0} = - \nu \wedge g + \rho^{-1} \la \nu, \beta \wedge g\ra \nu,\\
  \nu \wedge b_{0, 0} = \frac{1}{z} \Bigl( \rho (\nu \wedge g )+ \rho^{-1} \la\beta , \nu \wedge g\ra \beta\Bigr). 
  \end{aligned} 
  \end{equation} 
  Thus we obtain $\nu \wedge \tilde{E}\vert_{x_1 = 0} = f$ and
  $$\nu \wedge \tilde{H}\vert_{x_1 = 0}  = i_{\nu}(x') \tilde{H}\vert_{x_1 = 0} = \sum_{j = 0}^{N -1} h^j Op_h(i_{\nu} B_{j, 0} )\tilde{f}.$$
 Following \cite{V1} and using (\ref{eq:3.5}), for the principal symbol of $\nu \wedge \tilde{H}\vert_{x_1 = 0}$ one deduces
  $$i_{\nu} B_{0, 0} \tilde{f} = \nu \wedge b_{0,0} = m (\nu \wedge g) = m \tilde{f} + m i_{\nu} \sum_{j= 1}^3 (\nu_j(y') - \nu_j (x')) I_j \tilde{f}$$
  with a matrix symbol $m : = \frac{1}{z} (\rho I + \rho^{-1} \bc)$ and matrix valued symbol $\bc$ defined by $\bc v = \la \beta, v \ra \beta, \: v \in \R^3.$
  Then we obtain
  $$Op_h(i_{\nu} B_{0,0}) \tilde{f} = Op_h(m) \tilde{f} + h Op_h(\tilde{m} )\tilde{f}$$
  with $\tilde{m} \in \mathcal S^0_{0}.$  Choosing $\tilde{f} = \psi(x') (\nu \wedge f)$, we obtain a local parametrix $T_{N, \psi}(h, z)$ and in Theorem 1.1 in \cite{V1} the  estimate 
   \begin{equation} \label{eq:3.6} 
    \|\nc(\lambda)(\psi f)-  Op_h(m  + h \tilde{m})(\nu \wedge \psi f) \|_{\hc_0^t} \leq C h\theta^{-5/2} \|f\|_{\hc_{-1}^t}
    \end{equation} 
 has been  established in a more general setting assuming a lower bound $\theta >h^{2/5}.$ With the last condition one can study the case $ z = 1 + \ii \theta = h \lambda,\: h = |\Re \lambda|^{-1}$, provided  $|\Re \lambda| \geq |\im \lambda|$.   
  
 In this paper we need a parametrix  in the elliptic case $ z = - \ii + t(h)$ and in (\ref{eq:3.6}) we can obtain an approximation modulo ${\mathcal O}(h^{- \ell_2+ N})$  adding lower order terms of $T_{ N, \psi}(h, z)$ and exploiting the bound $\theta \geq 1 - h^2$ as well as the estimates (\ref{eq:3.1}), (\ref{eq:3.3}). According to Lemma 4.2 in \cite{V1}, one has the estimates
 $$\big | \partial_{x'}^{\alpha} \partial_{\xi'}^{\beta} \Bigl(e^{\frac{\ii \varphi}{h}}\Bigr)\big | \leq  C_{\alpha, \beta} |\xi'|^{-|\beta|} e^{- C |\xi'| x_1/h}$$
 for $0 \leq x_1 \leq 2 \delta$ with constants $C > 0, C_{\alpha, \beta} > 0$ independent of $x_1, z$ and $h.$ In fact, the above estimates are proved for $(x',\xi') \in {\rm supp}\: (1 - \eta)$, while for $(x', \xi') \in {\rm supp}\: \eta$ the factor $|\xi'|$ is bounded.  Then (see (4.31) in \cite{V1}) 
 $$h^{-N} x_1^N e^{\ii \varphi/h} \in S^{-N}_{0, 1} $$
 uniformly in $x_1$ and $h$. Now let
 $$-\ii h \nabla \wedge \tilde{E}- z \phi \tilde{H}  = (2 \pi h)^{-2} \iint e^{\frac{\ii}{h} (\la y', \xi'\ra + \varphi)} V_1(x, y', \xi', h, z) d\xi'dy' = U_1,$$
 $$-\ii h \nabla \wedge \tilde{H} + z \phi \tilde{E} = (2 \pi h)^{-2} \iint e^{\frac{\ii}{h} (\la y', \xi'\ra + \varphi)} V_2(x, y', \xi', h, z) d\xi'dy' = U_2$$
with
$$V_1 = h \tilde{\chi} a + h^N \chi(d \nabla_x) \wedge a_{N-1} + x_1^N \sum_{j=0}^{N-1} h^j \chi \Psi_j,$$
$$V_2 =  h \tilde{\chi} b + h^N \chi(d \nabla_x) \wedge b_{N-1} + x_1^N \sum_{j=0}^{N-1} h^j \chi \tilde{\Psi}_j,$$
where $\tilde{\chi}$ has support in $\delta \leq x_1 \leq 2 \delta.$ Clearly,
$$(h\partial_x)^{\alpha} U_j ( x_1, .) = Op_h\Bigl(h^{|\alpha|} \partial_x^{\alpha} (e^{\ii \varphi/h} V_j)\Bigr) \tilde{f}, \: j = 1, 2.$$
 Combing this with the properties (\ref{eq:3.3}) and the proof of Lemma 4.3 in \cite{V1}, we obtain the estimate
 \begin{equation}
 \|(h \partial_x )^{\alpha} U_j\|_{L^2(\Gamma)} \leq C_{\alpha, N} h^{- \ell_{\alpha} + N} \|f\|_{H_h^{-1}(\Gamma)}
 \end{equation}
 with $\ell_{\alpha}$ independent of $h, N$ and $f$.
Thus by the argument in Section 4 in \cite{V1} we construct a local parametrix in the elliptic region  and
 \begin{equation} \label{eq:3.7}
  \|\nc\Bigl(-\frac{1}{\th}\Bigr) (\psi f) - T_{N, \psi}(h, z) (\nu \wedge \psi f)\|_{H^{s}_h(\Gamma) } \leq C_{N} h^{- \ell_s + N} \|f\|_{L^2(\Gamma)},\: s \geq 0,\:  N \geq N_s.
  \end{equation}
 
   Choosing a partition of unity $\sum_{j = 1}^M \psi_j(x') \equiv 1$ on $\Gamma$, we construct a parametrix $T_N(h, z) = \sum_{j=1}^M T_{N, \psi_j} (h, z)$  and obtain
     \begin{equation} \label{eq:3.8}
  \|\nc\Bigl(-\frac{1}{\th}\Bigr) f- T_N(h, z) (\nu \wedge  f)\|_{H^{s}_h(\Gamma)} \leq C_{N} h^{-\ell_s+ N} \|f\|_{L^2(\Gamma) },\:s\geq 0,\:  N \geq N_s. 
  \end{equation} 
 
 For the operator $P(-\frac{1}{\th}) f = \nc(-\frac{1}{\th}) (\nu \wedge f)= \nc(\lambda)(\nu \wedge f)$
 one has an approximation by $- T_N(h, z)f$. Moreover,   
     for $ z = -\ii$ the principal symbol of $-T_N(h, -\ii)$ becomes
  $$- m = \frac{1}{\ii} \Bigl( \ii\sqrt{1 + r_0} I+ \frac{\bc}{\ii \sqrt{1 + r_0}}\Bigr) = \sqrt{1 + r_0} I - \frac{1}{\sqrt{1 + r_0}} \bc.$$
  
  Now we discuss briefly the existence of the parametrix for the problem
  \begin{equation} \label{eq:3.10}
\begin{cases}-\ii  h\: \curl E =  z H,\: x \in \Omega,\\
- \ii h\: \curl H  = - z E, \: x \in \Omega,\\
 \nu \wedge H = f,\: x \in \Gamma,\\
(E, H) - {\rm outgoing} \end{cases}
\end{equation} 
with $-\ii z = h \lambda, \:z = -\ii ( 1+ \ii t)^{-1}.$ We follow the construction above with the same phase function. The transport equations for $a_j, b_j$ have the form
 \begin{equation} \label{eq:3.11} 
 \begin{cases} 
 (d \nabla_x \varphi) \wedge b_j + z a_j = \ii (d \nabla_x) \wedge b_{j-1} + x_1^N \Psi_j,\\
 (d\nabla_x \varphi) \wedge a_j - z b_j = \ii(d \nabla_x) \wedge a_{j-1} + x_1^N \tilde{\Psi}_j,\\
 \nu \wedge b_j = \begin{cases} g, \: j = 0,\\ 0, \: j \geq 1 \end{cases}  \: {\rm on}\: x_1 = 0,
 \end{cases}
 \end{equation}  
 where $a_{-1} = b_{-1} = 0.$ This system is the same as (\ref{eq:3.2}) if we replace $z$ by $- z$ and $a_j, b_j$ by $b_j, a_j$, respectively. Therefore, by using (\ref{eq:3.5}), we obtain
 \begin{equation}\label{eq:3.12} 
  \begin{aligned}
  b_{0, 0} = - \nu \wedge g + \rho^{-1} \la \nu, \beta \wedge g\ra \nu,\\
  \nu \wedge a_{0, 0} = -\frac{1}{z} \Bigl( \rho (\nu \wedge g )+ \rho^{-1} \la\beta , \nu \wedge g\ra \beta\Bigr). 
  \end{aligned} 
  \end{equation}  
We obtain an analog of (\ref{eq:3.8}) with $\nc(h), T_N(h, z)$ replaced by $\nc_1(h), T_{1, N}(h, z)$. For the operator $P_1(-\frac{1}{\th})f=-\nc_1(-\frac{1}{\th})(\nu \wedge f)$ we have an approximation with  $T_{1, N}(h, z)f$ and by (\ref{eq:3.12}) the principal symbol of $T_{1, N}(h, -\ii)$ becomes
 $$m_1 = \frac{1}{\ii} \Bigl( \ii\sqrt{1 + r_0} I+ \frac{\bc}{\ii \sqrt{1 + r_0}}\Bigr) = \sqrt{1 + r_0} I - \frac{1}{\sqrt{1 + r_0}} \bc.$$

  \section{Properties of the operator $\pc(h)$}

In this section we study the case $\th$ real. Recall that  the operator $\bc(h D_{x'})$ has matrix symbol $\bc(x', h\xi')$ such that
$$\bc f = \la \beta, f \ra \beta, \: f \in \R^3,$$
where $\beta = \beta(x', \xi') \in \R^3$ is vector valued homogeneous polynomial of order 1 in $\xi'$ introduced in the previous section. The equality $\la \beta, \beta \ra = r_0$ implies
$\bc(x', \xi') \beta(x', \xi')  = r_0(x', \xi') \beta(x', \xi')$ and $\bc(x', \xi') (\nu(x')  \wedge \beta(x', \xi')) = 0$. Thus the matrix $\bc(x', \xi')$ has three eigenvectors $\nu(x') , \nu(x') \wedge \beta(x', \xi'), \: \beta(x', \xi')$ with corresponding eigenvalues $0, 0, r_0(x', \xi').$ These eigenvalues are defined globally on $\Gamma$. 
Let  $\|\xi'\|_g$ be the induced Riemann metric on $T^*(\Gamma)$ and 
let $b(x', \xi') = \beta(x', \frac{\xi'}{\sqrt{r_0(x', \xi')}})$. For $\|\xi'\|_g = 1$ introduce the unitary $(3 \times 3)$ matrix 
$$U(x', \xi') = \begin{pmatrix}\vert  & \vert & \vert \\ \nu(x') &  \nu(x') \wedge b(x', \xi') & b(x', \xi') \\ \vert & \vert & \vert \end{pmatrix} .$$
Then for $\|\xi'\|_g  = 1$ one obtains a global diagonalisation
$$ U^T (x', \xi') \bc(x', \xi') U(x', \xi')  = \begin{pmatrix} 0 & 0 & 0 \\0 & 0 & 0\\0 & 0 & r_0\end{pmatrix} ,$$
where $A^T$ denotes the transpose matrix of $A$. Writing $\xi'= \omega \|\xi'\|_g$ with $\|\omega\|_g = 1$ and using the fact that $B(x', \xi')$ and $r_0(x', \xi')$ are homogeneous of order 2 in $\xi'$, one concludes that  the above diagonalisation is true for all $\xi'$.

First we study the case $0 < \gamma(x) < 1,\: \forall x \in \Gamma,$ which yields $\gamma_0(x) =\frac{1}{\gamma(x)}.$ 
Introduce the self-adjoint operator $\pc(h)= -T_N(h, -\ii) - \gamma_0(x) I$ with principal symbol 
$$p_1 = \sqrt{1 + h^2 r_0} \Bigl (I - \frac{\bc(h \xi)}{1 + h^2 r_0}\Bigr) - \gamma_0(x) I= \tilde{p}_1 - \gamma_0(x)I ,$$
where  $I$ is the $(3 \times 3)$ identity matrix. We assume that $N$ is fixed sufficiently large and we omit this in the notation $\pc(h).$ Moreover, as it was mentioned in Section 2, we can write the pseudo-differential operator as a classical one and 
\begin{equation} \label{eq:4.1} 
 U^T p_1 U = \begin{pmatrix} \sqrt{1 + h^2 r_0} -\gamma_0& 0 & 0\\
0 & \sqrt{1 + h^2 r_0} -\gamma_0& 0\\
0 & 0 & (1 + h^2 r_0)^{-1/2} -\gamma_0,\end{pmatrix}.
\end{equation} 
Moreover, $U^{-1} = U^T$ and $U^T$ is the principal symbol of $(Op_h (U))^{-1}$. To examine the invertibility of $\pc(h)$, observe that the symbol 
$$- \gamma_0(x) \leq (1 + h^2 r_0)^{-1/2} - \gamma_0(x) \leq -(\gamma_0(x) - 1)$$
is elliptic.  Write $(Op_h(U))^{-1}  \pc(h)  Op_h(U)$ in a block matrix form
$$ (Op_h(U))^{-1}  \pc(h) Op_h(U) = \begin{pmatrix} R(h) & S(h) \\  S^*(h) &  r(h) \end{pmatrix}, $$
where $R(h)$ is a $(2 \times 2)$ matrix valued operator, $S(h)$ is $( 2 \times 1)$ matrix valued operator with symbol in $h \mathcal S^0_{0} (\Gamma),$ the adjoint operators $S^*(h)$ is $( 1 \times 2)$ matrix valued operator, while
$$\begin{pmatrix} R(h) & 0 \\ 0 & r(h)\end{pmatrix}$$
has principal symbol  (\ref{eq:4.1}). The equation $ (Op_h (U))^{-1}  \pc(h)Op_h(U)  (Y, y_3) = (F, f_3)$ with a vector $Y = (y_1, y_2)$ and $F = (f_1, f_2)$ implies 
$$r(h)  y_3 + S^*(h) Y = f_3.$$
Then  $y_3 = -r(h)^{-1} S^*(h) Y + r(h)^{-1} f_3$ and for $Y$ one obtains the equation
$$Q(h)Y = \Bigl( R(h)  -  S(h)r(h)^{-1} S^*(h) \Bigr) Y = F - S(h)  r(h)^{-1} f_3.$$
The invertibility of the operator 
$$Q(h): = R(h)  -  S(h)r(h)^{-1} S^*(h) $$  
depends of that of $R(h)$ and $R(h)$ 
has principal symbol 
$$q_1 = \begin{pmatrix} \sqrt{1 + h^2 r_0} - \gamma_0 & 0 \\ 0 & \sqrt{1 + h^2 r_0} - \gamma_0 \end{pmatrix}. $$

Let $$  c_0 = \min_{x \in \Gamma} \gamma_0(x) = (\max_{x \in \Gamma} \gamma(x))^{-1},\:   c_1 = \max_{x \in \Gamma} \gamma_0(x)= (\min_{x \in \Gamma} \gamma(x))^{-1} .$$ 
Introduce the constants $C = \frac{1}{c_1^2}, \: \ep = \frac{C}{2}(c_0 - 1)^2 < 1/2 $ and set $\la hD \ra =  (1 - h^2\Delta_{\Gamma})^{1/2}.$ We say that $A \geq B$ if $(A u, u) \geq (B u, u),\: \forall u \in L^2(\Gamma; \C^3).$ We need the following  

\begin{prop} The operator $Q(h)$ satisfies  the estimate
\begin{equation} \label{eq:4.2}
h \frac{\partial Q (h)}{\partial h} + C Q(h) \la h D \ra^{-1}  Q(h) \geq \ep \la h D \ra.
\end{equation} 
 \end{prop}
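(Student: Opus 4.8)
The strategy is to reduce the operator inequality \eqref{eq:4.2} to an estimate on the principal symbol level, and to control the lower-order terms by absorbing them into the elliptic right-hand side $\ep \la hD\ra$, exactly as in the scalar model treated in Section 12 of \cite{DS} and in \cite{SjV}, \cite{P2}. First I would record that $Q(h) = R(h) - S(h) r(h)^{-1} S^*(h)$ is a self-adjoint $h$-pseudodifferential operator acting on $L^2(\Gamma;\C^2)$; since $S(h)$ has symbol in $h\mathcal S^0_0$ and $r(h)^{-1}$ is a bounded elliptic operator (because $-\gamma_0 \le (1+h^2 r_0)^{-1/2} - \gamma_0 \le -(\gamma_0-1)$, so $r(h)$ is invertible with uniformly bounded inverse), the correction $S(h) r(h)^{-1} S^*(h)$ has symbol in $h^2\mathcal S^0_0$. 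Hence $Q(h) = R(h) + h^2 \mathcal S^0_0$, and similarly $h\partial_h Q(h) = h\partial_h R(h) + h^2 \mathcal S^0_0$, because differentiating in $h$ of symbols in $h^2\mathcal S^0_0$ stays in $h^2\mathcal S^0_0$ in this semiclassical-as-classical normalisation (the parameter $h$ enters only through $h\xi$, and $h\partial_h(h\xi)\cdot\partial_{h\xi}$ preserves the symbol classes up to the stated orders). So it suffices to prove \eqref{eq:4.2} with $Q(h)$ replaced by $R(h)$, up to an error $\mathcal O(h^2)$ in operator norm, which is harmless against $\ep\la hD\ra \ge \ep$.

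Next I would pass to the scalar reduction. The operator $R(h)$ has principal symbol $q_1 = (\sqrt{1+h^2 r_0} - \gamma_0) I_2$, a scalar multiple of the $2\times 2$ identity; write $g(h) := \sqrt{1+h^2 r_0}$, so $q_1 = (g(h)-\gamma_0)I_2$ and the full symbol of $R(h)$ is $(g(h)-\gamma_0)I_2 + h\, \mathcal S^0_0$. Then $h\partial_h R(h)$ has principal symbol $h\partial_h g(h)\, I_2 = \frac{h^2 r_0}{\sqrt{1+h^2 r_0}} I_2$, which is $\ge 0$. The key pointwise computation is to check the symbol inequality
\begin{equation} \label{eq:plan1}
h\,\partial_h g(h) + C\,(g(h)-\gamma_0)^2\,\la\xi\ra^{-1} \ \geq\ \ep\,\la\xi\ra
\end{equation}
with $\la\xi\ra = \sqrt{1+h^2 r_0}$ (which is the principal symbol of $\la hD\ra$ in these coordinates, since $r_0$ is the principal symbol of $-\Delta_\Gamma$), uniformly in $(x',\xi')$, $h$, and $x\in\Gamma$. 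Dividing by $\la\xi\ra$ and writing $s := h^2 r_0 \ge 0$, $\la\xi\ra = \sqrt{1+s}$, inequality \eqref{eq:plan1} becomes
\begin{equation} \label{eq:plan2}
\frac{s}{1+s} + C\,\frac{(\sqrt{1+s}-\gamma_0)^2}{1+s}\ \geq\ \ep,
\end{equation}
and one checks this by elementary calculus: for $s$ large the first term is close to $1 > \ep$; for $s$ near $0$ the second term is close to $C(1-\gamma_0)^2 \ge C(c_0-1)^2 = 2\ep > \ep$; and the left side, being a continuous function of $s\in[0,\infty)$ whose two summands are individually controlled, stays $\ge \ep$ throughout — here one uses $\gamma_0(x)\le c_1$ so $C = c_1^{-2}$ normalises the $\gamma_0$-dependence, and $\gamma_0(x)\ge c_0 > 1$ (the standing hypothesis $0<\gamma<1$). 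This is the routine but essential inequality; I would carry it out by minimising the left side of \eqref{eq:plan2} over $s\ge 0$ and checking the minimum exceeds $\ep$.

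Finally I would upgrade the symbol inequality to the operator inequality. By the sharp Gårding inequality (or simply by $h$-pseudodifferential composition and positivity of $\la\xi\ra$ symbols), the symbol bound \eqref{eq:plan1} gives
\begin{equation} \label{eq:plan3}
h\,\partial_h R(h) + C\,R(h)\la hD\ra^{-1} R(h)\ \geq\ \ep\,\la hD\ra - C' h,
\end{equation}
where the $\mathcal O(h)$ error collects the contributions of the subprincipal terms in $R(h)$, in $\la hD\ra^{-1}$, and the non-commutativity in forming $R(h)\la hD\ra^{-1}R(h)$ (all symbols live in fixed classes $\mathcal S^k_0$, so the symbolic calculus is exact to leading order with $\mathcal O(h)$ remainders). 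Then, adjusting $\ep$ slightly (or absorbing $C'h \le \tfrac{\ep}{2}$ for $h \le h_0$ small and noting $\la hD\ra \ge 1$), and finally replacing $R(h)$ by $Q(h)$ and $h\partial_h R(h)$ by $h\partial_h Q(h)$ at the cost of the $\mathcal O(h^2)$ errors identified in the first paragraph, one obtains \eqref{eq:4.2}.

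\textbf{Main obstacle.} The delicate point is \emph{not} the scalar inequality \eqref{eq:plan2} but the bookkeeping that justifies $Q(h) = R(h) + \mathcal O(h^2)$ and $h\partial_h Q(h) = h\partial_h R(h) + \mathcal O(h^2)$ as operators with symbols in the right classes, together with the fact that $r(h)^{-1}$ and its $h$-derivatives remain in $\mathcal S^0_0$ uniformly — this uses the ellipticity of $r(h)$ (its symbol bounded away from $0$ by $-(\gamma_0-1) \le \cdots \le -\gamma_0 < 0$) and the stability of the $\mathcal S^0_0$ calculus under inversion and differentiation in $h$. Once that is in place, the composition $Q(h)\la hD\ra^{-1}Q(h)$ is handled by the standard $h$-calculus with $\mathcal O(h)$ remainders, and \eqref{eq:4.2} follows from \eqref{eq:plan1}.
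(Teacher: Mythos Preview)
Your proposal is correct and follows essentially the same route as the paper: work at the principal-symbol level with the scalar symbol $\sqrt{1+h^2 r_0}-\gamma_0$, verify the pointwise inequality you call \eqref{eq:plan2}, and pass to operators via the sharp G\aa rding inequality, absorbing the $\mathcal O(h)$ remainders into the slack for $h\le h_0$ small. The only cosmetic difference is that the paper checks the symbol inequality by an explicit algebraic rearrangement---splitting $(C\gamma_0^2-1)(1+h^2 r_0)^{-1/2}$ into the manifestly non-negative piece $(1-C\gamma_0^2)\bigl(1-(1+h^2 r_0)^{-1/2}\bigr)$ plus the constant $(C\gamma_0^2-1)$, and then observing $(1+C-\ep)-2C\gamma_0+C\gamma_0^2-1=C(\gamma_0-1)^2-\ep\ge\ep$---rather than by your minimisation-over-$s$ argument, but the two verifications are equivalent and use the same constants $C=c_1^{-2}$, $\ep=\tfrac{C}{2}(c_0-1)^2$.
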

 
  \begin{proof} The proof is a repetition of that of Prop. 4.1 in \cite{P2}. For the sake of completeness we present the details. We have
$$h \frac{\partial q_1}{\partial h} = \frac{h^2 r_0}{\sqrt{ 1 + h^2 r_0}} I = \sqrt{1 + h^2 r_0}  I - (1 + h^2 r_0)^{-1/2} I ,$$
where $I$ is the $(2 \times 2)$ identity matrix.  The  operator $CQ(h) \la hD\ra^{-1} Q(h)$ has principal symbol
 $$C\sqrt{1 + h^2 r_0} I    - 2 C \gamma_0 I +C \gamma_0^2 (1 + h^2 r_0)^{-1/2} I$$
and the  principal symbol of the left hand side of (\ref{eq:4.2}) becomes 
$$( 1 + C - \ep) \sqrt{1 + h^2 r_0} I  + \ep \sqrt{1 + h^2 r_0} I  - 2 C \gamma_0I + (C \gamma_0^2 - 1) (1 + h^2 r_0)^{-1/2}I. $$
We write the last term in the form
$$(1 - C\gamma_0^2)\Bigl(1  - ( 1+h^2 r_0)^{-1/2} \Bigr) I  + (C\gamma_0^2 - 1) I = A_1 + A_2. $$
Since $ 1 - C \gamma_0^2(x) \geq 0$ and $1  - ( 1+h^2 r_0)^{-1/2} \geq 0$, the term $A_1$ is symmetric non-negative definite matrix and we may apply the semi-classical strict G\"arding inequality to bound from below $(Op_h(A_1)u, u)$ by $-C_1 h \|u\|^2.$ Next
$$(1 + C - \ep) ( \la h D \ra u, u) \geq (1 + C - \ep) \|u\|^2$$ 
and
$$\Bigl( (C(\gamma_0 - 1)^2- \ep)u, u\Bigr) \geq (C(c_0 - 1)^2 - \ep) \|u\|^2 =  \ep \|u \|^2.$$
The lower order symbol $h q_0$ of the operator $Q(h)$ yields a term 
$$h (Op(q_0)u, u) \geq - h\|Op(q_0)\|_{L^2 \to L^2} \|u\|^2 =- hC_2 \|u\|^2$$ 
and we may absorb these terms taking $0 < h \leq \ep(C_1 + C_2)^{-1} = \frac{\ep}{C_3}.$
\end{proof} 

For the analysis of the eigenvalues of $Q(h)$ we will follow the approach of \cite{SjV}. Introduce the semiclassical Sobolev space $H^s(\Gamma; \C^2)$ with norm $\|u\|_s = \|\la h D \ra^s u\|_{L^2(\Gamma; \C^2)}.$
Let 
$$\mu_1(h) \leq \mu_2(h) \leq ...\leq \mu_k(h) \leq ...$$
be the eigenvalues of $Q(h)$ repeated with their multiplicities. Fix $0 < h_0 \leq \frac{\ep}{C_3},$ where $\ep > 0$ is the constant in Proposition 4.1 and let $k _0 \in \N$ be chosen so that  $\mu_k(h_0) > 0 $ for $k \geq k_0.$ This follows from the fact that the number of the non-positive eigenvalues of $Q(h_0)$ is  given by a Weyl formula (see for instance Theorem 12.3 in \cite{DS})
$$(2\pi h_0)^{-2} \iint _{\sqrt{1 + r_0} - \gamma_0 \leq 0} dx'd\xi' + {\mathcal O}(h_0^{-1}).$$

 By using Proposition 4.1 and choosing $0 < \delta \leq \frac{c_0 - 1}{2}$, one obtains
$$\frac{\ep}{2} \leq h \frac{d \mu_k(h)}{dh} \leq C_0, \: k \geq k_0,$$
whenever $\mu_k(h) \in [-\delta, \delta], \: 0 < h \leq h_0$ (see Section 4 in \cite{P2}).   
Now  if $0 < \frac{1}{r} < h_0$ and $\mu_k(1/r) < 0$, then there exists unique $h_k, 1/r < h_k < h_0$ such that $\mu_k(h_k) = 0$. Clearly, the operator $Q(h_k)$ is not invertible and for the invertibility of $Q(h)$ we must avoid small intervals around $h_k$. The purpose is to obtain a bijection between the set of $h_k \in (0, h_0]$ and the eigenvalues in $\Lambda.$ 
Repeating the argument in Sections 4 in \cite{P2} and \cite{SjV},  one obtains the following
\begin{prop} [Prop.4.1, \cite{SjV}] Let $p > 3$ be fixed. The inverse operator $Q(h)^{-1} : L^2(\Gamma; \C^2) \rightarrow L^2(\Gamma; \C^2) $ exists and has norm ${\mathcal O}(h^{-p})$ for $h \in (0, h_0] \setminus \Omega_p,$ where $\Omega_p$ is a union of disjoint closed intervals $J_{1, p}, J_{2, p},...$ with $|J_{k,p}| = {\mathcal O} (h^{p -1})$ for $h \in J_{k,p}.$ Moreover, the number of such intervals that intersect $[h/2, h]$ for $0 < h \leq h_0$ is at most ${\mathcal O}(h^{1- p}).$ 
\end{prop}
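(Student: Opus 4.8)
The plan is to follow the Sjöstrand--Vodev scheme of \cite{SjV} (Prop.~4.1), adapting it to the self-adjoint operator $Q(h)$ at hand, using the differential inequality of Proposition 4.1 together with the monotonicity/Lipschitz bounds on the eigenvalues $\mu_k(h)$ that it yields. The strategy is to build the exceptional set $\Omega_p$ as the union of small intervals surrounding each zero $h_k$ of each eigenvalue branch $\mu_k$ that vanishes in $(0,h_0]$, and to control $\|Q(h)^{-1}\|$ both away from these zeros (using that $|\mu_k(h)|$ is then not too small) and the combinatorics of how many such intervals can meet $[h/2,h]$ (using the lower bound $h\,d\mu_k/dh\ge \ep/2$ wherever $\mu_k$ is near $0$, and a Weyl-type count of the relevant branches).

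First I would recall the setup from the text preceding the statement: the eigenvalues $\mu_1(h)\le\mu_2(h)\le\cdots$ of $Q(h)$, repeated with multiplicity, are Lipschitz in $h$ on each dyadic scale, and there is $k_0$ with $\mu_k(h_0)>0$ for $k\ge k_0$. For $k<k_0$ the branches stay away from $0$ on all of $(0,h_0]$ up to finitely many crossings, handled the same way; so the main content is the branches with $k\ge k_0$. Fix $\delta\in(0,(c_0-1)/2]$. Proposition 4.1 gives $\ep/2\le h\,d\mu_k/dh\le C_0$ whenever $\mu_k(h)\in[-\delta,\delta]$. Hence each such branch, once it enters $[-\delta,\delta]$, is strictly increasing in $h$, so it has at most one zero $h_k$ in $(0,h_0]$, and near $h_k$ we have the two-sided bound $\frac{\ep}{2}\,\log\frac{h}{h_k}\lesssim \mu_k(h)\lesssim C_0\,\log\frac{h}{h_k}$, i.e. $|\mu_k(h)|\asymp |h-h_k|/h$ for $h$ close to $h_k$. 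I would then define, for each zero $h_k$, the interval $J_{k,p}=\{h: |h-h_k|\le h^{p-1}\}$ (more precisely a dyadically-adjusted version so that $|J_{k,p}|={\mathcal O}(h^{p-1})$ on $J_{k,p}$), and set $\Omega_p=\bigcup_k J_{k,p}$. On $h\in(0,h_0]\setminus\Omega_p$ every eigenvalue either satisfies $|\mu_k(h)|\ge\delta$ or $|\mu_k(h)|\ge c\,h^{p-1}/h=c\,h^{p-2}$; taking into account the worst case, $\|Q(h)^{-1}\|\le \max_k |\mu_k(h)|^{-1}={\mathcal O}(h^{-p})$ (the extra power absorbing the discrepancy between $h^{p-2}$ and $h^{p}$ with room to spare, which is exactly why one needs $p>3$).

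Next I would bound the number of intervals $J_{k,p}$ meeting a dyadic block $[h/2,h]$. An interval $J_{k,p}$ meets $[h/2,h]$ only if the zero $h_k$ lies in a fixed enlargement $[h/4, 2h]$, say, and at such $h_k$ the branch $\mu_k$ passes through $0$; by the uniform lower bound $h\,d\mu_k/dh\ge\ep/2$ on $[-\delta,\delta]$, the values $\mu_k(h_0)$ for these $k$ are spread out, so their number is controlled by how many eigenvalues of $Q(h_0)$ lie in an ${\mathcal O}(1)$-window, which by the Weyl asymptotics quoted in the text (Theorem 12.3 in \cite{DS}, applied to the principal symbol $\sqrt{1+r_0}-\gamma_0$) is ${\mathcal O}(h_0^{-2})$ --- but one must do better: restricting to those branches whose zero lies in $[h/4,2h]$ rather than in all of $(0,h_0]$, a finer phase-space volume estimate gives the count ${\mathcal O}(h^{1-p})$. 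Concretely, the derivative bound shows the $h_k$'s produced by distinct branches are at mutual distance $\gtrsim h^{p-1}$ near scale $h$ whenever the associated intervals are forced to be disjoint, so at most ${\mathcal O}(h/h^{p-1})={\mathcal O}(h^{1-p})$ of them fit in $[h/2,h]$; this simultaneously shows the $J_{k,p}$ can be taken disjoint at each scale (merging any that overlap, which only improves the length bound by a bounded factor).

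The main obstacle is the counting step: getting the \emph{sharp} bound ${\mathcal O}(h^{1-p})$ on the number of exceptional intervals at scale $h$, rather than a crude bound coming from the total number of branches. This requires combining (i) the quantitative lower bound $h\,d\mu_k/dh\ge\ep/2$ from Proposition 4.1 to convert "many zeros near $h$" into "many well-separated eigenvalue values at $h_0$", and (ii) a careful bookkeeping that the separation forced on the $h_k$ is of order $h^{p-1}$ at scale $h$ precisely when we declare the corresponding intervals disjoint. Everything else --- the at-most-one-zero-per-branch statement, the $\|Q(h)^{-1}\|={\mathcal O}(h^{-p})$ bound off $\Omega_p$, and the role of $p>3$ --- follows mechanically from the monotonicity and the elementary estimate $|\mu_k(h)|\asymp|h-h_k|/h$ near a zero, exactly as in \cite{SjV} and Section 4 of \cite{P2}; since $Q(h)$ here plays the role of the operator denoted the same way in \cite{P2}, the argument transfers with only notational changes, and I would simply cite those sources for the routine parts while spelling out the adaptation of the derivative inequality.
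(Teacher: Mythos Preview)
The paper does not prove this proposition; it only writes ``Repeating the argument in Sections 4 in \cite{P2} and \cite{SjV}, one obtains the following'' and then states the result. Your sketch therefore goes beyond what the paper supplies, and its overall architecture is the intended one from \cite{SjV}: use Proposition~4.1 to get $\frac{\ep}{2}\le h\,\mu_k'(h)\le C_0$ on the strip $\mu_k\in[-\delta,\delta]$, hence at most one zero $h_k$ per branch; excise a short interval around each $h_k$; and bound $\|Q(h)^{-1}\|=(\min_k|\mu_k(h)|)^{-1}$ on the complement via $|\mu_k(h)|\asymp|h-h_k|/h$ near $h_k$ together with $|\mu_k(h)|\ge\delta$ elsewhere.

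Your counting step, however, has a real gap. The claim that ``the $h_k$'s produced by distinct branches are at mutual distance $\gtrsim h^{p-1}$'' is circular: you assume it ``whenever the associated intervals are forced to be disjoint'' and then use it to bound their number. Nothing prevents many distinct eigenvalue branches from vanishing at the same, or arbitrarily close, values of $h$; the derivative inequality controls each $\mu_k$ individually, not the spacing between zeros of different branches. (There is also an arithmetic slip: $h/h^{p-1}=h^{2-p}$, not $h^{1-p}$.) The argument you label ``crude'' and then discard is in fact the correct one: the number of branches whose zero lies in $[h/4,2h]$ is bounded by the change in the number of negative eigenvalues of $Q(\cdot)$ across that interval, which by the two-dimensional Weyl law on $\Gamma$ (the one quoted just before the proposition) is $\mathcal O(h^{-2})$. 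Since $p>3$, this is $\mathcal O(h^{1-p})$, and \emph{this} is where the hypothesis $p>3$ enters --- not in the norm estimate, where your own bound $|\mu_k(h)|\gtrsim h^{p-2}$ already yields $\|Q(h)^{-1}\|=\mathcal O(h^{2-p})\le\mathcal O(h^{-p})$ for every $p$. The disjointness of the $J_{k,p}$ and the length bound after merging overlapping intervals likewise follow from this Weyl count combined with the $\mathcal O(h^{p+1})$ length of each individual set $\{h:|\mu_k(h)|<h^p\}$, not from any a~priori separation of the $h_k$.
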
 

If the operator $Q(h)^{-1}$ exists, it is easy to see that $\pc(h)$ is also invertible. First, we have
\begin{equation} \label{eq:4.3} 
\begin{aligned}
 \begin{pmatrix} I &  S(h)r^{-1} (h) \\
0_{2,1}  & 1 \end{pmatrix}
\begin{pmatrix} Q(h) & 0_{1, 2} \\S^*(h) & r(h) \end{pmatrix}  = \begin{pmatrix} R(h) & S(h) \\  S^*(h) & r(h)\end{pmatrix},
\end{aligned} 
\end{equation}
where $I$ is the identity $(2 \times 2)$ matrix and $0_{1,2},\: 0_{2, 1}$ are $(1 \times 2)$ and $(2 \times 1)$ matrices, respectively,  with zero entires.
Second, the operator $r^{-1}(h)$ has principal symbol $\frac{\sqrt{1 + h^2 r_0}}{1 - \sqrt{1 + h^2 r_0} \gamma_0}\in \mathcal S^0_0,$ so $r^{-1}(h) : H^s(\Gamma; \C) \to H^s(\Gamma; \C)$ is bounded for every $s$. On the other hand,  $S(h): H^s(\Gamma; \C) \to H^s(\Gamma; \C^2)$ has norm ${\mathcal O}_s(h).$ Consequently, the  operator 
$$ \begin{pmatrix} I &  S(h)r^{-1} (h) \\
0_{2, 1}  &1 \end{pmatrix}^{-1}$$
is bounded in ${\mathcal L}(H^s(\Gamma; \C^3), H^s(\Gamma; \C^3))$, while
$$\begin{pmatrix} Q(h) & 0_{1, 2} \\S^*(h) & r(h) \end{pmatrix}^{-1} = \begin{pmatrix} Q(h)^{-1} & 0_{1, 2} \\ - r^{-1} (h) S^* (h) Q(h)^{-1} & r^{-1} (h) \end{pmatrix}. $$
We deduce that the operator on the right hand side of (\ref{eq:4.3}) is invertible, whenever $Q(h)$ is invertible and since $Op_h(U)$ is invertible this implies the invertibility of $\pc(h).$ Finally, the statement of Proposition 4.2 holds for the operator $\pc(h)^{-1}$ with the same intervals $J_{k, p}$ and we have a bound $\|\pc(h)^{-1}\|_{L^2(\Gamma; \C^3) \to L^2(\Gamma; \C^3)} = {\mathcal O} (h^{-p})$ for $h \in (0, h_0] \setminus \Omega_p.$

The analysis of the case $\gamma(x) > 1, \: \forall x \in \Gamma,$ is completely similar to that of the case $0 < \gamma(x) <1$ examined above and we have $\gamma_0(x) = \gamma(x).$ We study the operators $\nc_1(\lambda), C_1(\lambda)$
and $P_1(h) = P_1(- \frac{1}{h})$ introduced at the end of Section 2. For the self adjoint operator $\pc_1(h) = T_{1, N}(h, -\ii) - \gamma_0(x)I,$ the argument at the end of Section 3 shows that $\pc_1(h)$ has principal symbol $\tilde{p}_1(x', h \xi') - \gamma_0(x) I$.  Thus we obtain the statements of Proposition 4.1 and Proposition 4.2 with a self-adjoint operator $Q_1(h)$ having principal symbol
$$ \begin{pmatrix} \sqrt{1 + h^2 r_0} - \gamma & 0 \\ 0 & \sqrt{1 + h^2 r_0} - \gamma_0 \end{pmatrix}. $$ 
 Notice that both operators $Q(h), Q_1(h)$ have the same principal symbol. Next for the operator $\pc_1(h)$ we obtain the same statements as those for $\pc(h).$

\section{Relation between the trace integrals for $\pc(\th)$ and $\cc(\th)$}

The purpose in this section is to study the operators $\pc(\th)$ and $\cc(\th)$ for complex $\th = h( 1 + \ii t)\in L,\: |t|\leq h^2.$ We change the notations and we will use the notation $h$ for the points in $L \subset \C$ with $|\im h| \leq (\Re h)^2, \: 0 < \Re h \leq h_0 \ll 1.$ First we study the case $0 < \gamma(x) < 1, \: \forall x \in \Gamma.$ The operator $T_N(h, z)$ can be extended for $h \in L$ as a holomorphic function of $h$. The same is true for $\pc(h) = -T_N(h, z) - \gamma_0(x) I$. To study $\pc(h)^{-1}$, we must examine the inverse of the operator on the left hand side of (\ref{eq:4.3}) for $h \in L.$ Clearly, $S(h),\: S^*(h)$  and $r(h)$ can be extended for $h \in L$ and $h^{-1}S(h),\: r(h)^{-1}$ are bounded as operators from $H^s(\Gamma; \C)$ to $H^ s(\Gamma; \C^2)$ and from $H^s(\Gamma, \C)$ to $H^s(\Gamma, \C)$, respectively.  Since $b(x', h \xi') = b(x', \xi'),$  the symbol of $U(x', \xi') $ may be trivially extended for $h \in L.$ It remains to study $Q(h)^{-1}.$ Repeating the proof of Lemma 5.1 in \cite{SjV} and using Proposition 4.1, we get
\begin{equation} \label{eq:5.1}
\|Q(h)^{-1} \|_{\mathcal L (H^{-1/2}(\Gamma; \C^2), H^{1/2}(\Gamma; \C^2))} \leq C\frac{\Re h}{|\im h|},\: \im h \neq 0, \: h \in L.
\end{equation} 
Here we have used the estimate
$$\|r(h)^{-1} \|_{H^s(\Gamma; \C) \to H^s(\Gamma; \C)}  \leq C_s' \leq C_s'\frac{\Re h}{|\im h|},\: \im h \neq 0, \: h \in L$$
since $|\im\: h|  \leq (\Re  h)^2 \leq h_0 \Re  h .$
To obtain an estimate of 
$$\|Q(h)^{-1}\|_{{\mathcal L}(H^s(\Gamma; \C^2), H^{s+ 1}(\Gamma; \C^2))},$$ 
as in Section 5 in \cite{P2}, we introduce a $C^{\infty}$ symbol
$$ \chi(x', \xi') = \begin{cases} 2, \: x'\in \Gamma, \:\|\xi'\|_g \leq B_0,\\
0, \: x' \in \Gamma, \: \|\xi'\|_g \geq B_0 + 1.\end{cases}$$
Here $B_0 > 0$ is a constant such that $\sqrt{C_3} B_0 \geq 2 c_1,\: r_0(x', \xi') \geq C_3 \|\xi'\|_g^2.$ Then we extend homorphically $\chi(x', \Re h D_{x'})$ to $\zeta(x', h D_{x'})$ for $h \in L$ and consider the operator 
$M( h) = Q(h) + \gamma_0(x') \zeta(x', h D_{x'}).$ This modification implies the property $Q(h) - M(h):\: {\mathcal O}_s(1): H^{-s}(\Gamma; \C^2) \rightarrow H^s(\Gamma; \C^2)$ for every $s$ and the operator $M(h)$ with principal symbol $M(x', \xi') \in \mathcal S^1_{0}$ becomes elliptic. Then $M(h)^{-1} : H^s(\Gamma; \C^2) \rightarrow H^{s+1}(\Gamma; \C^2)$ is bounded by ${\mathcal O}_s(1)$  and repeating the argument in Section 5, \cite{P2} and using (\ref{eq:5.1}),   
one deduces
\begin{equation} \label{eq:5.2}
\|Q(h)^{-1}\|_{\mathcal L (H^s(\Gamma; \C^2), H^{s+ 1}(\Gamma; \C^2))} \leq C_s\frac{\Re h}{|\im h|},\: \im h \neq 0. 
\end{equation} 
 Taking the inverse operators in (\ref{eq:4.3}), one obtains with another constant $C_s$ the estimate
\begin{equation} \label{eq:5.3} 
\|\pc(h)^{-1}\|_{\mathcal L (H^s(\Gamma; \C^3), H^{s+ 1}(\Gamma; \C^3))} \leq C_s\frac{\Re h}{|\im h|},\: \im h \neq 0 .
\end{equation}
 
 Following \cite{SjV}, we introduce piecewise smooth positively oriented curve $\gamma_{k, p} \subset \C$ which is a union of four segments: $\Re h \in J_{k, p},\: \im h = \pm (\Re h)^{p+1}$ and $\Re h \in \partial J_{k, p},\: |\im h| \leq (\Re h)^{p+1}$, $J_{k, p}$ being the interval in $\Omega_p$ introduced in Proposition 4.3. 
  \begin{prop} For every $h \in \gamma_{k, p}$ the inverse operator $\pc(h)^{-1}$ exists and
 \begin{equation} \label{eq:5.4}
 \|\pc(h)^{-1} \|_{{\mathcal L} (H^s(\Gamma; \C^3) , H^{s+ 1} (\Gamma; \C^3))} \leq C_{k, s} (\Re h)^{-p}.
 \end{equation} 
 \end{prop}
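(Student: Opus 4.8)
The plan is to decompose $\gamma_{k,p}$ into its four constituent segments and to bound $\pc(h)^{-1}$ on each. Call the two horizontal pieces $\{\Re h\in J_{k,p},\ \im h=\pm(\Re h)^{p+1}\}$ the \emph{horizontal part} and the two vertical pieces $\{\Re h=h_*,\ |\im h|\leq h_*^{p+1}\}$, $h_*$ an endpoint of $J_{k,p}$, the \emph{vertical part}. Since $p>3$, every point of $\gamma_{k,p}$ satisfies $|\im h|\leq (\Re h)^{p+1}\leq C_1(\Re h)^{4}$ and hence lies in $L$. On the horizontal part one moreover has $\im h\neq 0$, so estimate (\ref{eq:5.3}) applies verbatim and, since there $\Re h/|\im h|=(\Re h)^{-p}$, it yields at once the existence of $\pc(h)^{-1}$ and the bound (\ref{eq:5.4}) on this part, with a constant not even depending on $k$. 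On the vertical part estimate (\ref{eq:5.3}) still gives invertibility for $\im h\neq 0$, but only the bound $C_s\,\Re h/|\im h|$, which is of the wanted order $(\Re h)^{-p}$ near the two corners and degenerates as $\im h\to 0$; so the real content of the proof is to control $\pc(h)^{-1}$ on a vertical segment near the real axis.

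For that I would argue perturbatively off the real endpoint $h_*\in\partial J_{k,p}$. The first ingredient is that $\pc(h_*)$ is invertible with $\|\pc(h_*)^{-1}\|_{\mathcal L(H^s(\Gamma;\C^3),H^{s+1}(\Gamma;\C^3))}=\mathcal O_{k,s}(h_*^{-p})$: by the construction of $\Omega_p$ in Proposition 4.2, the endpoints of $J_{k,p}$ are points where $Q(h_*)$ --- hence, through the block factorisation (\ref{eq:4.3}), also $\pc(h_*)$ --- remains invertible with $L^2$-norm $\mathcal O(h^{-p})$, and the mapping property into $H^{s+1}$ is the one already used in (\ref{eq:5.3}). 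The second ingredient is that $\pc(h)=-T_N(h,-\ii)-\gamma_0(x)I$ is holomorphic in $h$ on $L$ (all pieces of the parametrix $T_N$ of Section 3 are), with $\|h\,\partial_h\pc(h)\|_{\mathcal L(H^{s+1},H^s)}=\mathcal O(1)$ uniformly, as one checks directly on the symbol ($\partial_h p_1$ being of class $h^{-1}\mathcal S^1_0$). Hence for $h=h_*+\ii\tau$ with $|\tau|\leq h_*^{p+1}$ one may write
$$\pc(h)=\pc(h_*)\Bigl(I+\pc(h_*)^{-1}\bigl(\pc(h)-\pc(h_*)\bigr)\Bigr),\qquad \|\pc(h)-\pc(h_*)\|_{\mathcal L(H^{s+1},H^s)}\leq C\frac{|\tau|}{h_*}\leq Ch_*^{p},$$
so that, as soon as $\|\pc(h_*)^{-1}(\pc(h)-\pc(h_*))\|_{\mathcal L(H^{s+1},H^{s+1})}<1/2$, a Neumann series produces the invertibility of $\pc(h)$ and the bound (\ref{eq:5.4}) on the vertical part; combined with the horizontal estimate this proves the Proposition.

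The main obstacle is precisely securing that last smallness. Comparing crudely $\|\pc(h_*)^{-1}\|=\mathcal O(h_*^{-p})$ with the perturbation $\mathcal O(h_*^{p})$ gives only a product of size $\mathcal O(1)$, not $<1/2$. As in Section 5 of \cite{SjV} and Section 5 of \cite{P2}, the way around this is that the intervals $J_{k,p}$ are taken slightly larger than the minimal set on which $\|\pc(h)^{-1}\|$ genuinely exceeds $h^{-p}$, a set whose scale near $h_k$ is only $h^{p+1}$ (this is compatible with $|J_{k,p}|=\mathcal O(h^{p-1})$): then each endpoint $h_*$ lies at distance $\gg h_*^{p+1}$ from the unique $h_k\in(0,h_0]$ with $\mu_k(h_k)=0$, and the monotonicity $h\,\partial_h\mu_k(h)\geq\ep/2$ of Proposition 4.1 forces $|\mu_k(h_*)|\gg h_*^{p}$. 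Since near $h_k$ it is exactly this single eigenvalue of $Q(h)$ that is small --- the remainder of the spectrum of $Q(h)$ being bounded away from $0$ by the elliptic lower bound contained in Proposition 4.1 --- one gets $\|\pc(h_*)^{-1}\|=o(h_*^{-p})$, whence the Neumann perturbation is $o(1)$, as required. The constant $C_{k,s}$ in (\ref{eq:5.4}) then enters only through the distance from $h_k$ to $\partial J_{k,p}$ and could in fact be chosen uniform in $k$.
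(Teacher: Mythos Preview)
Your approach is essentially the same as the paper's: the paper does not give a self-contained proof but simply refers to Proposition 5.2 in \cite{SjV}, listing as ingredients exactly what you use --- the real-$h$ bound $\|\pc(h)^{-1}\|=\mathcal O(h^{-p})$ on $(0,h_0]\setminus\Omega_p$ (Proposition~4.2), a Taylor expansion of $\pc(h)$ for $|\im h|\leq(\Re h)^{p+1}$, and estimate (\ref{eq:5.3}). Your decomposition into horizontal segments (where (\ref{eq:5.3}) applies directly with $\Re h/|\im h|=(\Re h)^{-p}$) and vertical segments (where one perturbs off the real endpoint via Neumann series) is precisely the Sj\"ostrand--Vodev scheme, and your discussion of why the naive $\mathcal O(1)$ product can be beaten by exploiting the freedom in the construction of the intervals $J_{k,p}$ is the correct mechanism; the paper simply suppresses this detail by pointing to \cite{SjV}.
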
 
 The proof is the same as in Proposition 5.2 in \cite{SjV}. It is based on the estimate of $\|\pc(h)^{-1}\|_{L^2(\Gamma; \C^3) \to L^2(\Gamma; \C^3)}$ for $h \in (0, h_0] \setminus \Omega_p$, the Taylor expansion of $\pc(h)$ for $0 \leq |\im h| \leq (\Re h)^{p+1}$  and the application of (\ref{eq:5.3}). We omit the details.
 Of course, by the same argument an analog to (\ref{eq:5.4}) holds for the norm of the operator $Q(h)^{-1}$ and $h \in \gamma_{k, p}.$\\
 
 To obtain an estimate for $\cc(h)^{-1}$, with $N$ large enough write
 $$\cc(h)f = \nc(-\frac{1}{h})f +\gamma_0(x)  (\nu \wedge f)  = T_N(h, z)(\nu \wedge f) + \gamma_0(x) (\nu \wedge f) +   {\mathcal R}_q(h, z)(\nu \wedge f)$$
 $$ = - \pc(h)i_{\nu} f + {\mathcal R}_q(h, z)i_{\nu}f , \: q \gg 2p$$
with   $\mathcal R_q(h, z): {\mathcal O}_s((\Re h)^q)) : H^s \rightarrow H^{s + q - 1} .$  This yields
$$ \pc(h) ^{-1} \cc(h)f = -\Bigl(Id -\pc(h)^{-1} {\mathcal R}_q(h, z) \Bigr) i_{\nu}f$$
and by (\ref{eq:5.4}) one deduces
$$\big \| \pc(h)^{-1} {\mathcal R}_q( h, z) \big \| _{{\mathcal L}( H^s, H^{s + q})} \leq C_s (\Re h) ^{-p + q}.$$  
For small $\Re h$ this implies
$$i_{\nu} \Bigl(Id - \pc(h)^{-1} {\mathcal R}_q(h, z)\Bigr)^{-1} \pc(h)^{-1} \cc(h) = Id.$$   
Repeating the argument in Section 5 of \cite{P2}, we obtain
\begin{equation} \label{eq:5.5} 
\|\cc(h)^{-1} \|_{\mathcal L(H^s, H^{s+1})} \leq C_s (\Re h)^{-p},\: h \in \gamma_{k, p}.
\end{equation} 
In the same way writing
$$\cc(h)^{-1} - i_{\nu} \pc(h)^{-1} = i_{\nu} \Bigl( \Bigl(Id - \pc(h)^{-1} {\mathcal R}_q(h, z)\Bigr)^{-1} - Id\Bigr)  \pc(h)^{-1}, $$
one gets
\begin{equation} \label{eq:5.6} 
\| \cc(h)^{-1} - i_{\nu}\pc(h)^{-1} \|_{\mathcal L(H^s, H^{s + q -1})} \leq C_s (|h|^{q- 2p} ),\: h \in \gamma_{k, p}.
\end{equation} 
On the other hand, $i_{\nu} \pc(h)^{-1} = \Bigl(- \pc(h) i_{\nu}\Bigr)^{-1}$ since $i_{\nu} i_{\nu} = -Id.$
By using the Cauchy formula 
$$\frac{d}{dh} \Bigl(\cc(h) -  (-\pc(h)i_{\nu}\Bigr)= \frac{1}{2 \pi \ii} \int_{\tilde{\gamma}_{k, p}} \frac{C(\zeta) + \pc(\zeta) i_{\nu}}{\zeta - h} d\zeta$$
$$= \int_{\tilde{\gamma}_{k, p}} {\mathcal R}_q(\zeta, z) i_{\nu} d\zeta,$$
where $\tilde{\gamma}_{k, p}$ is the boundary of a domain containing $\gamma_{k, p}$, one deduces
\begin{equation}\label{eq:5.7} 
 \big\| \frac{d}{dh} {\cc}(h) - \frac{d}{dh} (-\pc(h)i_{\nu})\big\| _{{\mathcal L}(H^s, H^{q+ q- 1}) } \leq C_s (\Re h)^q.
\end{equation}
 
   Now we pass to a trace formula involving $\pc(h)^{-1}$ and $Q(h)^{-1}.$  Recall that $k_0 \in \N$ is fixed so that $\mu_k(h_0) > 0, \: k \geq k_0.$  Let $\mu_k(h_k) = 0, \: 0 < h_k < h_0,\: k \geq k_0.$ Since $\mu_k(h)$ is increasing when $\mu_k(h) \in [-\delta, \delta]$, the function $\mu_k(h)$ has no other zeros for $0 < h \leq h_0.$ We define the multiplicity of $h_k$ as the multiplicity of the eigenvalues $\mu_k(h)$ of $Q(h)$ and denote by $\dot{A}$ the derivative of $A$ with respect to $h$. 
      \begin{prop} Let $\beta \subset L$ be a closed positively oriented simple $C^1$ curve without self intersections such that there are no points $h_k$ on $\beta$ with $\mu_k(h_k) = 0, \: k \geq k_0$.Then
      \begin{equation} \label{eq:5.8}
       {\rm tr}_{H^{1/2}(\Gamma; \C^3)} \frac{1}{2 \pi \ii} \int_{\beta} \pc(h)^{-1}\dot{\pc}(h) dh =  {\rm tr}_{H^{1/2}(\Gamma; \C^2)} \frac{1}{2 \pi \ii} \int_{\beta} Q(h)^{-1}\dot{Q}(h) dh    
   \end{equation}  
   is equal to the number of $h_k$ counted with their multiplicities in the domain bounded by $\beta.$ 
      \end{prop}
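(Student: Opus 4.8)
The plan is to reduce the trace integral over $\pc(h)^{-1}\dot{\pc}(h)$ to the one over $Q(h)^{-1}\dot{Q}(h)$ by exploiting the block factorization \eqref{eq:4.3}, and then to identify the latter trace with a zero-counting integral for the scalar-valued families $\mu_k(h)$. First I would rewrite \eqref{eq:4.3} in the schematic form $\pc(h) = Op_h(U)\, L(h) N(h)\, Op_h(U)^{-1}$, where $L(h) = \left(\begin{smallmatrix} I & S(h)r(h)^{-1} \\ 0 & 1\end{smallmatrix}\right)$ is invertible and holomorphic on $\beta$ with holomorphic inverse, and $N(h) = \left(\begin{smallmatrix} Q(h) & 0 \\ S^*(h) & r(h)\end{smallmatrix}\right)$. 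Since $Op_h(U)$ is $h$-independent up to the trivial extension (its symbol does not depend on $h$, as noted after \eqref{eq:5.1}), conjugation by $Op_h(U)$ drops out of the trace by cyclicity and leaves no $\dot{}$-contribution. The logarithmic-derivative identity $\pc^{-1}\dot{\pc} = (LN)^{-1}\tfrac{d}{dh}(LN) = N^{-1}L^{-1}\dot L N + N^{-1}\dot N$, combined with $\tr(N^{-1}L^{-1}\dot L N) = \tr(L^{-1}\dot L)$ and the fact that $L^{-1}\dot L$ is holomorphic on and inside $\beta$ (so its contour integral vanishes), reduces the left side of \eqref{eq:5.8} to $\tr\frac{1}{2\pi\ii}\int_\beta N(h)^{-1}\dot N(h)\,dh$. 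Then the block-triangular structure of $N$, with $r(h)$ holomorphic and invertible throughout $L$ (its principal symbol is in $\mathcal S^0_0$ and elliptic, with bounded holomorphic inverse), gives $N^{-1}\dot N$ a block-triangular form whose diagonal blocks are $Q^{-1}\dot Q$ and $r^{-1}\dot r$; the trace of the second again integrates to zero by holomorphy, leaving exactly the right side of \eqref{eq:5.8}.

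The one technical subtlety in the reduction is that $\pc(h)^{-1}$ is only controlled as an operator $H^s \to H^{s+1}$ and the integrand should be read as acting on $H^{1/2}(\Gamma;\C^3)$; here one uses that $\dot{\pc}(h)$ is a first-order operator (it is $-\dot T_N(h,z)$, with symbol gaining no regularity), so $\pc(h)^{-1}\dot{\pc}(h)$ maps $H^{1/2}\to H^{1/2}$ and, because the difference $\pc(h)^{-1}\dot{\pc}(h) - (\text{holomorphic part})$ is a product of a bounded operator with a trace-class one coming from the compact resolvent structure on $\Gamma$, the trace is well defined. Concretely, off the finitely many $h_k$ the operator $Q(h)^{-1}$ exists and is bounded $H^{-1/2}\to H^{1/2}$ by \eqref{eq:5.1}, while near each $h_k$ on or inside $\beta$ Proposition 4.3 (its holomorphic analog) gives control on $\gamma_{k,p}$; one deforms $\beta$ to a union of small loops around each $h_k$ plus a holomorphy region, so the contour integral is a finite sum of residue-type contributions.

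It remains to identify $\tr\frac{1}{2\pi\ii}\int_\beta Q(h)^{-1}\dot Q(h)\,dh$ with the number of $h_k$ inside $\beta$ counted with multiplicity. This is the standard Gohberg--Sigal argument for the "multiplicity of the characteristic values" of a holomorphic Fredholm family: localize near a single $h_k$, use that $Q(h)$ is self-adjoint for real $h$ with simple (or finite-multiplicity) eigenvalue $\mu_k(h)$ crossing $0$ transversally (by $\tfrac{\ep}{2}\le h\,\mu_k'(h)\le C_0$ from Proposition 4.1, the crossing is simple and nondegenerate, so $h_k$ has multiplicity equal to the multiplicity of $0$ as an eigenvalue of $Q(h_k)$), and apply the Riesz-projection formula $\tr\frac{1}{2\pi\ii}\int_{|h-h_k|=\epsilon} Q(h)^{-1}\dot Q(h)\,dh = \sum_{j}\,(\text{order of vanishing of }\mu_{k,j}(h)\text{ at }h_k)$. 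Summing over $k_0\le k$ with $h_k$ inside $\beta$ (a finite set, since $h_k \to 0$ only) yields the claim; equality of the two traces in \eqref{eq:5.8} then follows from the reduction above.

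\textbf{Main obstacle.} The part I expect to require the most care is not the algebra of \eqref{eq:4.3} but the justification that every contour integral here is of a trace-class operator and that the various "holomorphic, hence integrates to zero" steps are legitimate on the appropriate scale of spaces: one must check that $Q(h)^{-1}\dot Q(h)$, $L(h)^{-1}\dot L(h)$ and $r(h)^{-1}\dot r(h)$ differ from trace-class operators by pieces whose contour integral vanishes, using the smoothing gained from $\mathcal R_q$-type remainders together with the compactness of $\nc(\lambda)^{-1}$ on $\Gamma$. As in \cite{SjV} and \cite{P2} this is handled by a cutoff/finite-rank approximation argument combined with Lemma 2.2 of \cite{SjV}, and I would invoke that machinery rather than reprove it.
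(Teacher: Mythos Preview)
Your proposal is correct and follows essentially the same route as the paper's own proof: both use the factorization \eqref{eq:4.3} in the form $\pc(h)=Op_h(U)\,L(h)N(h)\,Op_h(U)^{-1}$, eliminate the $L$ and $Op_h(U)$ factors via cyclicity of the trace together with holomorphy of $L^{-1}\dot L$ (and, in the paper, of the analogous term coming from $Op_h(U)$), reduce $N^{-1}\dot N$ to its diagonal blocks $Q^{-1}\dot Q$ and $r^{-1}\dot r$, drop $r^{-1}\dot r$ by holomorphy, and then invoke the argument of Proposition~5.3 in \cite{SjV} (your Gohberg--Sigal/Riesz projection description is exactly that) to identify the remaining trace with the count of $h_k$. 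Your observation that $Op_h(U)$ is actually $h$-independent (the symbol $U(x',\xi')$ is homogeneous of degree $0$) is a small simplification over the paper, which treats that conjugation by the same ``cyclicity plus holomorphy'' manipulation as for $L$; note also that what you cite as ``Proposition~4.3'' is Proposition~4.2 in the text.
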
 
 \begin{proof} Since $\beta$ is related to the eigenvalues of $Q(h)$, repeating without any changes the argument of the proof of Proposition 5.3 in \cite{SjV}, one deduces the existence of the trace on the right hand side of (\ref{eq:5.8}) and the fact that this trace is equal to the number of $h_k$  in the domain bounded by $\beta.$ Next
 $$ \int_{\beta} \begin{pmatrix} Q(h) & 0_{1, 2} \\S^*(h) & r(h) \end{pmatrix}^{-1}  \begin{pmatrix} \dot{Q}(h) & 0_{1, 2} \\ \dot{S}^*(h) & \dot{r}(h) \end{pmatrix} dh= \int_{\beta} \begin{pmatrix} Q(h)^{-1} \dot{Q}(h) & 0 \\ Y_{1, 2} (h) & r^{-1} (h) \dot{r}(h) \end{pmatrix} dh$$
 and the integral of $r^{-1}(h) \dot{r}(h)$ vanishes since this operator is analytic in the domain bounded by $\beta.$ Thus the trace of the right  hand side of the above equality is equal to the right hand side of (\ref{eq:5.8}) multiplies by $(2 \pi \ii).$ 
 Write
 $$\begin{pmatrix} Q(h) & 0_{1, 2} \\S^*(h) & r(h) \end{pmatrix}^{-1}  \begin{pmatrix} I &  S(h)r^{-1} (h) \\
0_{2,1}  & 1 \end{pmatrix}^{-1} \frac{d}{dh} \Bigr[\begin{pmatrix} I &  S(h)r^{-1} (h) \\
0_{2,1}  & 1 \end{pmatrix} \begin{pmatrix} Q(h) & 0_{1, 2} \\S^*(h) & r(h) \end{pmatrix}\Bigr]$$
$$= \begin{pmatrix} Q(h) & 0_{1, 2} \\S^*(h) & r(h) \end{pmatrix}^{-1} \begin{pmatrix} \dot{Q}(h) & 0_{1, 2} \\\dot{S}^*(h) & \dot{r}(h) \end{pmatrix}+ Z(h).$$
The  integral of $Z(h)$ vanishes  by the cyclicity of trace since the product 
$$\begin{pmatrix} I & S^*(h)r^{-1}(h) \\ 0_{2,1}  & 1 \end{pmatrix}^{-1} \frac{d}{dh} \begin{pmatrix} I &  {S}^*(h)r^{-1} (h) \\
0_{2,1}  & 1 \end{pmatrix}.$$
 is an analytic function of $h$.  By applying the equality (\ref{eq:4.3}), we obtain that the trace of integral involving $\begin{pmatrix} R(h) & S(h) \\  S^*(h) & r(h)\end{pmatrix}$ is equal to the trace on the right hand side of (\ref{eq:5.8}). By the same manipulation as above taking the product with $(Op_h(U))^{-1}$ on the right and by $Op_h(U)$ on the left, one obtains (\ref{eq:5.8}).  
\end{proof} 
 
Notice that by the cyclicity of the trace we get
 $${\rm tr}_{H^{1/2}(\Gamma; \C^3)}  \int_{\beta} \pc(h)^{-1}\dot{\pc}(h) dh = {\rm tr}_{H^{1/2}(\Gamma; \C^3)}  \int_{\beta}\Bigl(- \pc(h)i_{\nu}\Bigr)^{-1}\frac{d}{dh} \Bigl(- \pc(h)i_{\nu} \Bigr) dh.$$ 
  Applying the estimate (\ref{eq:5.6})  for $\cc(h)^{-1} - i_{\nu} \pc(h)^{-1} $ and (\ref{eq:5.7})  for $\frac{d}{dh}\cc(h) - \frac{d}{dh}\Bigl(- \pc(h)i_{\nu}\Bigr)$ and taking into account Proposition 5.2, we conclude as in Section 5 of \cite{P2} that in the case $0 < \gamma(x) < 1, \: \forall x \in \Gamma,$ we have
 $${\rm tr}\: \frac{1}{2 \pi \ii} \int_{\gamma_{k, p}} \cc(h)^{-1} \dot{\cc}(h) dh = {\rm tr}\: \frac{1}{2 \pi \ii} \int_{\gamma_{k, p}} \pc(h)^{-1} \dot{\pc}(h) dh $$
 $$= {\rm tr}\: \frac{1}{2 \pi \ii} \int_{\gamma_{k, p}} Q(h)^{-1} \dot{Q}(h) dh.$$
 The analysis in the case $\gamma(x) > 1, \: \forall x \in \Gamma,$ is completely similar and we have trace formula involving the operator $\cc_1(h)$ introduced at the end of  Section 2 and trace formula involving $Q_1(h)$ and $\pc_1(h) = T_{1, N}(h, z) - \gamma_0 I.$ In this case 
 $${\rm tr}\: \frac{1}{2 \pi \ii} \int_{\gamma_{k, p}} \cc_1(h)^{-1} \dot{\cc}_1(h) dh = {\rm tr}\: \frac{1}{2 \pi \ii} \int_{\gamma_{k, p}} \pc_1(h)^{-1} \dot{\pc}_1(h) dh $$
 $$= {\rm tr}\: \frac{1}{2 \pi \ii} \int_{\gamma_{k, p}} Q_1(h)^{-1} \dot{Q}_1(h) dh.$$ 
 
 The equality of traces shows that the proof of the asymptotic (\ref{eq:1.3}) is reduced to the count of $h_k$ with their multiplicities for which we have $\mu_k(h_k) = 0$ in the domain $\beta_{k, j}$ bounded by $\gamma_{k, p}$. Here $\mu_k(h)$ are the eigenvalues of $Q(h)$ (resp. $Q_1(h)$) if $0 < \gamma(x) < 1$ (resp. if $\gamma(x) > 1$).  We obtain  a bijection $\beta_{k, j} \ni h_k \rightarrow \ell(h_k) = \lambda_j \in \sigma_p(G_b) \cap \Lambda$ which preserves the multiplicities. The existence of  $h_k$ with $1/r < h_k< h_0$ is equivalent to $\mu_k(1/r) < 0$ and we are going to study the asymptotic of the counting function of the negative eigenvalues of $Q(1/r)$ (resp. $Q_1(1/r)$). The semiclassical principal symbol of both operators $Q(h),\: Q_1(h)$ has a double eigenvalue $q(x', \xi') = \sqrt{1 + r_0(x', \xi')} - \gamma_0(x')$. Applying Theorem  12.3 in \cite{DS}, we obtain
 $$\sharp\{ \lambda \in \sigma_p(G_b) \cap \Lambda: \: |\lambda| \leq r, \: r \geq C_{\gamma_0} \} = \frac{ r^2}{(2 \pi)^2} \int_{q(x', \xi') \leq 0} dx' d \xi' + {\mathcal O}_{\gamma_0}(r).$$
 Finally,
 $$\int_{q(x', \xi') \leq 0} dx'd\xi' = \int_{ r_0(x', \xi') \leq \gamma_0^2(x') - 1} dx'd\xi' = \pi \int_{\Gamma} (\gamma_0^2(x') - 1) dx'$$
and this completes the proof of Theorem 1.1.


\begin{thebibliography}{99}

\bibitem{CPR1} F. Colombini, V. Petkov and J. Rauch, {\em Spectral problems for non elliptic symmetric systems with dissipative boundary conditions}, J. Funct. Anal. {\bf 267} (2014), 1637-1661.

\bibitem{CPR2} F. Colombini, V. Petkov and J. Rauch, {\em Eigenvalues for Maxwell's  equations with dissipative boundary conditions}, Asymptotic Analysis, {\bf 99} (1-2) (2016), 105-124.

\bibitem{CP} F. Colombini and V. Petkov, {\em Weyl formula for the negative dissipative eigenvalues of Maxwell's equations}, Archiv der Mathemtik, {\bf 110} (2018), 183-195.


\bibitem{DS} M. Dimassi and J. Sj\"ostrand, {\em Spectral asymptotics in semi-classical limits}, London Mathematical Society, Lecture Notes Series, {\bf 268}, Cambridge University Press, 1999.

\bibitem{KH}  A. Kirsch and F. Hettlich,  {\em The Mathematical Theory of Time-Harmonic Maxwells Equations},
vol. 190 of Applied Mathematical Sciences, Springer, Switzerland, 2015.

\bibitem{LP} P. Lax and R. Phillips, {\em Scattering theory for dissipative systems}, J. Funct. Anal. {\bf 14} (1973), 172-235.

%\bibitem{M} A. Majda, {\em The location of the spectrum for the dissipative acoustic operator}, Indiana Univ. Math. J. {\bf 25} (1976), 973-987.
\bibitem{P} V. Petkov, {\em Scattering theory for hyperbolic operators}, Horth Holland, 1989. 

\bibitem{P1} V. Petkov, {\em Location of the eigenvalues of the wave equation with dissipative boundary conditions}, Inverse Problems and Imaging, {\bf 10} (4) (2016), 1111-1139.

\bibitem{P2} V. Petkov, {\em Weyl formula for the eigenvalues of the dissipative acoustic operator}, Res. Math. Sci.  {\bf 9} (1) (2022), Paper 5.

\bibitem{SjV} J. Sj\"ostrand and G. Vodev, {\em Asymptotics of the number of Rayleigh resonances}, Math. Ann. {\bf 309} (1997), 287-306.

\bibitem{V} G. Vodev, {\em Transmission eigenvalue-free regions}. Commun. Math. Phys. {\bf 336} (2015), 1141-1166.

\bibitem{V1} G. Vodev, {\em Semiclassical parametrix for the Maxwell equation and applications to the electromagnetic transmission eigenvalues}, Res. Math. Sci. {\bf 8} (3)  (2021), Paper 35.


\end{thebibliography}
\end{document}